\newtheorem{proposition}{Proposition}
\newtheorem{lemma}{Lemma}
\newtheorem{example}{Example}
\newtheorem{result}{Result}
\DeclareMathOperator*{\argmin}{arg\,min}
\pgfplotsset{compat=newest}
\newif\ifinclude
\journal{Elsevier}
\begin{document}

\begin{frontmatter}



\title{Optimal taxes and subsidies to incentivize modal shift for inner-city freight transport}


\author[inst1] {K. Tundulyasaree}\corref{cor1}
\ead{k.tundulyasaree@tue.nl}

\affiliation[inst1]{organization={Department of Industrial Engineering \& Innovation Sciences, Eindhoven University of Technology},
            city={Eindhoven},
            country={the Netherlands}}

\author[inst1]{L. Martin}
\author[inst1]{R. N. van Lieshout}
\author[inst1]{T. Van Woensel}

\cortext[cor1]{Corresponding author}

\begin{abstract}
With increasing freight demands for inner-city transport, shifting freight from road to scheduled line services such as buses, metros, trams, and barges is a sustainable solution. Public authorities typically impose economic policies, including road taxes and subsidies for scheduled line services, to achieve this modal shift. This study models such a policy using a bi-level approach: at the upper level, authorities set road taxes and scheduled line subsidies, while at the lower level, freight forwarders arrange transportation via road or a combination of road and scheduled lines. We prove that fully subsidizing the scheduled line is an optimal and budget-efficient policy. Due to its computational complexity, we solve the problem heuristically using a bi-section algorithm for the upper level and an Adaptive Large Neighbourhood Search for the lower level. Our results show that optimally setting subsidy and tax can reduce the driving distance by up to 12.5\% and substantially increase modal shift, albeit at a higher operational cost due to increased taxes. Furthermore, increased scheduled line frequency and decreased geographical scatteredness of freight orders increase modal shift. For the partial subsidy policy, we found that an additional budget provides a better trade-off between minimizing distance and transportation costs than solely increasing the subsidy level. In a Berlin, Germany, case study, we find that we can achieve up to 2.9\% reduction in driven distance due to 23.2\% scheduled line usage, which amounts to an increase of multiple orders of magnitude, despite only using a few stations for transshipment.
\end{abstract}


\begin{highlights}
\item Recycled tax as train subsidy can increase freight modal shift
\item A full subsidy is provably an optimal policy
\item Optimal tax and subsidy reduce the driven distance by up to 12.5\%
\end{highlights}

\begin{keyword}
Transportation \sep Routing \sep Heuristics \sep Logistics 
\end{keyword}
\end{frontmatter}
\section{Introduction}

Integrating scheduled line services for inner-city freight transport can help address the challenges of increasing demand. E-commerce sales are projected to reach \$6.3 trillion with an 8.8\% growth in 2024 \citep{forbes2024ecommerce}. This surge in demand for urban transport contributes to air pollution, road congestion, and a decline in quality of life. A promising solution is integrating freight transport into passenger scheduled line services. These services utilize existing urban transportation operating on fixed timetables, such as buses, metros, trams, and barges. During off-peak periods, the spare capacity in these services can be utilized for freight, increasing system efficiency and reducing the impact of road freight vehicles by minimizing road distances \citep{savelsbergh201650th}. Nonetheless, implementing this modal shift is difficult because it involves extra time and handling costs for transshipment. Therefore, proper incentives are required to encourage freight forwarders to shift the freight off the road. 

Carbon taxes and subsidies for modal shift are common economic policies. \cite{takman2023public} reviewed more than 90 European projects for modal shift since 2000 and found that most of these projects target economic policies, including taxes and subsidies, to encourage a shift to railway transportation. According to ex-ante reports, national-level grants and subsidies have shown positive performance, while EU-level policies have had mixed success. Despite total EU funding of around €1.1 billion, the EU’s regulatory and financial support for intermodality has not been sufficient for intermodal freight transport to compete effectively with road transportation \citep{EuropeanCourt2023}. Tax policies often face societal opposition due to higher costs, necessitating reinvesting tax revenue \citep{jagers2019impact}. Therefore, the proper settings for these economic policies are crucial to ensure their success.

In this paper, we aim to investigate the effect of road tax and scheduled line service subsidy on increasing the freight modal shift for inner-city freight transport settings. To achieve this, we develop a bi-level model to determine the optimal road tax and scheduled line service subsidy, where the subsidy is derived from recycled tax revenue and a given budget. The upper level of the model represents the perspective of a transportation authority, which targets to minimize total road freight distance by imposing road taxes and setting the scheduled line service subsidy. The lower level represents the perspective of a freight forwarder who aims to minimize transportation costs within the imposed system. We analyze the theoretical properties of this model and derive the structure of optimal policies. In addition, we developed a combined adaptive large neighborhood search and a bisection method to solve artificially generated instances, as well as a case study based on the city of Berlin.

Our contributions are threefold. First, we make theoretical contributions by showing that the policy with a fully subsidized scheduled line achieves the minimum possible driving distance and deriving other properties under fixed and variable budget levels. We found that fully subsidized scheduled line services represent an optimal policy, achieving the best trade-off among stakeholders. Second, we show that the optimal policy can result in high modal shifts and significant driving distance savings, albeit at higher system operation costs. These costs may be mitigated with additional budget allocations from the authority. Lastly, we demonstrate the benefits of our approach with a real Berlin case study, showing around a 3\% reduction in driving distance.

The remainder of this paper is structured as follows: In Section \ref{sec:LiteratureReview}, we review related research on economic policies in freight transportation and the literature on operating and planning scheduled lines integrated into inner-city freight transport. In Section \ref{sec:ProblemStatement}, we describe the problem and introduce the bi-level formulation of the problem. In Section \ref{sec:PropertiesOptimal}, we provide optimal solution properties. Furthermore, Section \ref{sec:Algorithm} presents the combined Bisection and Adaptive Large Neighborhood Search algorithm for solving the proposed problem. Section \ref{sec:NumericalExp} outlines the experimental designs and discusses all the numerical experiments. Finally, Section \ref{sec:Conclusion} concludes the study.

\section{Literature Review}
\label{sec:LiteratureReview}

This paper relates to (i) research on modal shift in inner-city freight transport as well as (ii) studies on operational planning in inner-city freight transport, focusing on the integration of truck transport and scheduled line services. 

\subsection{Modal Shift Regulations}
Subsidies and taxes are key public policy tools for promoting modal shift in freight transport \citep{takman2023public}. They incentivize freight forwarders to adapt their decisions towards more coordinated and sustainable operations. Authorities typically set subsidies and taxes to force freight forwarders to internalize external transport costs, like emissions and congestion \citep{santos2010part}. These policies face two main challenges: political feasibility and economic effectiveness. Political acceptability often encounters societal opposition, but revenue recycling -- spending that benefits stakeholders -- can increase support \citep{carattini2018overcoming,jagers2019impact}. For example, \cite{beiser2019could} found that most US sampled group supports carbon taxes as long as the revenue is invested in new infrastructure, renewable energy, and policies for low-income families, and providing tax rebates. Moreover, effectiveness depends on the price levels needed to induce behavior change. When setting the price, it is crucial to consider the hierarchical relationships between stakeholders, i.e., the decision from a stakeholder has a direct impact/effect on another stakeholder's decisions. 

We can model tax and policy setting using a pricing problem where the leader sets prices for certain activities to maximize benefits. At the same time, the followers choose activities to minimize operating costs \citep{labbe2016bilevel}. This model represents taxes as positive prices and subsidies as negative prices. Several studies consider how to allocate a given budget as subsidies, such as intermodal subsidies \citep{hu2022optimal}, bus service contracts \citep{he2023promoting}, and rail freight subsidies \citep{yin2024low, mohri2022designing}. Other studies \citep{bruni2024bi, labbe1998bilevel, brotcorne2008joint} focus exclusively on the taxation of intermediate hub facilities, tollways, or telecommunication networks without addressing the allocation of the additional revenues generated. We refer to the comprehensive review by \cite{caselli2024bilevel} for recent work adopting this price-setting formulation in various applications.

We focus on literature that combines tax and subsidy setting, i.e.,\enquote{tax revenue recycling}. \cite{qiu2020carbon2} evaluate a carbon tax and rebate system for air passenger transportation, where airlines pay taxes on emissions and receive subsidies for mitigation improvements. This approach reduces carbon emissions under low transaction costs and fuel price differences. \cite{jiang2021aviation} analytically evaluates the effects of using aviation tax revenues to subsidize high-speed rail. 
Counter-intuitively, he found that the policy can lead to undesired effects where air traffic volume increases and high-speed rail volume decreases. 
Because limiting the analysis to a single origin-destination pair abstracts the details of delivery such as multiple pickups and delivery points, consolidation effect, time window, \cite{qiu2020carbon} explore a more complex pollution routing problem with up to 100 nodes. 

They propose using road tax revenues for funding allowances based on reduced road freight emissions. This policy effectively reduces emissions while controlling additional costs for freight forwarders.
Our work extends upon this stream of literature by investigating the solution structure of our bi-level formulation, allowing us to evaluate optimal policy settings and by further numerical analyses. Unlike \cite{qiu2020carbon2, jiang2021aviation}, we consider a more realistically sized problem instead of a single OD pair. Additionally, we investigate the solution structure of our bi-level formulation, allowing us to evaluate optimal policy settings.

Very few studies consider the interaction between stakeholders in the context of using the scheduled line services for inner-city freight transport. \cite{ma2022game} propose an analytical model to characterize the strategic interaction between metro operators and logistics companies in a metro-integrated logistics system. The model considers both cooperative and non-cooperative markets. The metro operator sets the freight price, while logistics companies choose their transportation plan, opting for either road transport or metro modes. They show that the metro-integrated system can benefit the operators and logistics companies. In an extension, \cite{ma2023urban} additionally permit outsourcing where a freight forwarder hires a freight carrier for the road freight transport. However, both studies analyzed a stylized model using a single origin-destination pair. Compared to their work, we integrate the operational decision-making of the freight forwarder as a response to the policy. 

\subsection{Last-mile delivery using Scheduled Lines}
Several studies have investigated planning to use scheduled line services for inner-city freight transportation. The scheduled line services may include inland waterways, bus, tram, or metro services. Since these services are commonly passenger-oriented, the literature also refers to these services as freight-on-transit. For a comprehensive overview of the literature on freight-on-transit, we refer the reader to recent literature reviews by \cite{cleophas2019collaborative},   \cite{elbert2022freight} and \cite{cheng2023integrated}. We focus our review on the operational level studies on the last-mile delivery using the scheduled line services. For synchronization of routing with schedules in other applications, please refer to \cite{SOARES2024817}. Two primary approaches model the operation planning of this application: the two-echelon system and the Pickup and Delivery Problem with Time Windows and Scheduled Lines (PDPTW-SL).

The two-echelon system involves delivering freight from distribution centers to city areas using scheduled line services, followed by last-mile delivery via city freighters from public transport stations. Key studies in this area include \cite{masson2017optimization,schmidt2022using}, and \cite{mo2023vehicle}, which abstract the first echelon decisions and model them as replenishment nodes with fixed demands. They address the problem as a pickup and delivery problem with time windows, employing various algorithms like Adaptive Large Neighborhood Search (ALNS) and Branch-and-Price-and-Cut. \cite{wang2024two} take a different approach by explicitly modeling the first echelon routing decision, connecting satellites via public transport, and using city freighters for the second echelon, i.e., for the last-mile delivery.

The PDPTW-SL approach, proposed by \cite{ghilas2016pickup}, involves freight forwarders organizing last-mile delivery by choosing between vehicles alone or a mix of vehicles and public transit systems. Their computational results on small instances show that public transport can reduce operational costs by up to 20\%. To handle larger instances, \cite{ghilas2016adaptive} developed an Adaptive Large Neighborhood Search (ALNS) algorithm capable of solving instances with up to 100 transportation requests. Additionally, \cite{ghilas2018branch} proposed a Branch-and-Price method to solve instances with up to 50 transportation requests. \cite{de2024sustainable} relaxed the capacity constraints of public transport and introduced a destroy-and-repair neighborhood search heuristic to handle up to 500 requests. Similarly, \cite{he2023optimization} extended the PDPTW-SL to allow multiple trips for vehicles and incorporated practical constraints such as driver workload and driving distance limits. Others adopt the sample average approximation to tackle the stochastic version of the PDPTW-SL with stochastic freight demands \citep{ghilas2016scenario} and scheduled line capacity \citep{mourad2021integrating}.

However, these studies assume tactical decisions of scheduled line services, such as scheduling or capacity, and focus solely on the freight forwarder’s perspective, neglecting interactions with other stakeholders like transportation authorities. The literature contains few tactical studies on inner-city freight transport, including train scheduling \citep{ozturk2018optimization,horsting2023scheduling} and station capacity determination \citep{fontaine2021scheduled}. This gap hinders a comprehensive understanding of stakeholder interactions. In this study, we address this by incorporating the role of transportation authorities in setting road taxes and subsidies for scheduled line services and investigating the freight forwarder’s response in the PDPTW-SL setting.

\section{Problem Description and Formulation}\label{sec:ProblemStatement}
This section first presents the problem narrative of setting road tax and scheduled line subsidy and then the mathematical formulation. Next, we present some properties of the proposed problem and its optimal solutions.

\subsection{Problem Narrative}
We consider the problem of determining a tax and subsidy policy to incentivize a modal shift away from emission-intense road transport. The transportation authority sets fuel or access taxes per unit driven distance for road vehicles. It subsidizes freight transportation on scheduled lines such as urban light rail or inland waterway transportation. Given tax and subsidy levels, a freight forwarder directly transports goods with a truck or utilizes scheduled lines to minimize total costs. Due to the leader-follower structure, we model the policy-setting problem as a Stackelberg game.

The transportation authority aims to promote modal shift. In most European cities, public authorities are in charge of public transport operations and can levy local taxes. After passenger transport, scheduled lines have a remaining capacity that can be used for freight transport at a predefined cost. 
The authority has a budget of $B$ available to subsidize the scheduled line, reducing the fare by $s$. Further subsidies must be financed through distance-based road access taxes at the rate of $t$. 

The freight forwarder builds routes to transport exogenous requests, including a pickup location, delivery location, and time window. Freight demands remain constant regardless of tax and subsidy levels since the freight forwarder maintains consistent customer pricing. This is typically the case in competitive markets, where freight forwarders are, in essence, price-takers. The freight forwarder will utilize the scheduled line for an order if it reduces transportation costs and is feasible concerning the time windows. In this case, the good is transported to a transshipment point by a truck, waits there for the next available scheduled line (subject to its capacity), transported to the next transshipment point, and picked up by a truck. 
The routes depend on the authority's policy $\left(s,t\right)$. 

Since this paper focuses on the operational level, we consider strategic and tactical decisions regarding the scheduled line service, such as network design and operating schedule, as given.  We acknowledge that the effectiveness of the delivery scheme also depends on these decisions, as demonstrated by \cite{ghilas2016pickup}. 


In the following, we first detail a freight forwarder's decision problem and then formulate the bi-level tax-and-subsidy setting problem. 

\subsection{Freight Forwarder Decision Model} \label{subsec:FreightForwarderDeciaionModel}

The freight forwarder decides how to ship their transportation requests through their network, using direct transportation by truck or transshipping some requests via a scheduled line. 
The freight forwarder's joint decision for all their requests can be modeled as a  Pickup and Delivery Problem with Time Windows and Scheduled Line (PDPTW-SL), as introduced by \cite{ghilas2016pickup}. We only provide a conceptual description of the model as our bi-level formulation applies to other freight forwarder models involving dual transportation channels.
%
The decisions include the routing plan for road vehicles and the shipment amount allocated to the scheduled line. 
Constraints include typical routing and flow constraints from the PDPTW problem, accounting for vehicle capacity limits and time windows for requests. Additionally, we consider the scheduled line capacity and service’s running schedule, ensuring time synchronization between road vehicles and scheduled line services.
The objective is to minimize transportation costs composed of road and scheduled line costs. 
Unlike the approach in \cite{ghilas2016pickup}, the freight forwarder considers tax as part of the routing costs, and the scheduled line is partially subsidized. We will explain this further in the following section.

\subsection{Bilevel Road Tax and Train Subsidy Model}\label{subsec:TaxSubsidyModel}

The upper-level variables are $s$ for the scheduled line subsidy and $t$ for the road tax. The lower-level routing variables are succinctly notated as $\mathbf{x}$. For brevity, we introduce two other lower-level decision variables that directly depend on $\mathbf{x}$: the total road distance $d$ and the total flow cost on the scheduled line $f$. $\mathcal{X}$ gives the set of feasible lower-level solutions. We assume that even if multiple feasible lower-level solutions exist for one input $s,t$, the freight forwarder will select the same solution deterministically. The freight forwarder's strategy can either be cooperative \citep[see, e.g.~][]{dempe2024bilevel} or adversarial \citep[see, e.g.~][]{tsoukalas2009global,liu2018pessimistic}. The optimal values of the lower-level variables are indicated with a $^*$ and as a function of $s$ and $t$ since they depend on the upper-level variables. The rest of the parameters are the unit distance cost, $\phi$, and the available budget of the authority, $B$.
Then, the bi-level model is given by:
\begin{align}
     \min \limits_{s,t} \quad   & d^*(s,t) \label{eq:objective}\\
\textrm{subject to} \quad s f^*(s,t) - t \phi d^*(s,t)  & = B &\label{eq:constr_budget} \\
0 \leq s & \leq 1, \quad 0\leq t, &\label{eq:constr_domain}  \\
%
\left(d^*(s,t), f^*(s,t),x^*(s,t) \right) & \in  \argmin \limits_{(d,f,x)\in \mathcal{X}}  \left(1 + t \right) \phi d + \left(1 - s \right) f.  &\label{eq:constr_feasbile_area}  
\end{align}

Equation \eqref{eq:objective} represents the objective of the transportation authority: minimizing the total road distances. Constraint \eqref{eq:constr_budget} defines the budget constraint, where the difference between total subsidy and total tax revenue equals the given budget. Constraint \eqref{eq:constr_domain} specifies the subsidy and tax ranges. Finally, constraint \eqref{eq:constr_feasbile_area} defines that $\left(d^*(s,t), f^*(s,t),x^*(s,t) \right)$ are the optimal solutions for the lower-level problem introduced in subsection \ref{subsec:FreightForwarderDeciaionModel} with the objective function under tax and subsidy policy. Moreover, with the term $(1-s)$, we ensure that the provided subsidy does not exceed the total flow cost on the scheduled line service to prevent unnecessary modal shifts.

\section{Properties of the optimal policy}
\label{sec:PropertiesOptimal}
We begin by outlining the properties of the model defined in section \ref{subsec:TaxSubsidyModel}. Subsequently, we derive a condition for the optimal policy and explore its associated properties. All proofs are provided in \ref{app:proofs}. For notational brevity, we assume that $\phi =1$. 

We first examine the effects of changing $s$ and $t$, subject to a budget of $B$. Proposition \ref{prop:Proposition_1} shows that the objectives of the transportation authority and freight forwarders' objectives are inherently conflicting; it is impossible to improve one party, worsening the other.  Then, we show that increasing taxes cannot increase the driven distance (Proposition \ref{prop:Proposition_2}). On the other hand, Example \ref{ex:subsidyUpDistanceUp} shows that increasing the subsidy level can increase distances. Notwithstanding, we show that if the transportation authority sets the tax rate optimally, driven distance does decrease in the subsidy level (Proposition~\ref{prop:newProp3}), such that an optimal solution exists that fully subsidizes the scheduled line (Proposition \ref{prop:newProp4}). Moreover,  Proposition \ref{prop:Proposition_4} specifies the impact of increasing the authority's budget on the cost of the freight forwarder.

\begin{proposition}\label{prop:Proposition_1}

For a given budget $B$, the freight forwarder's total cost decreases if and only if the driven distance is increasing, i.e., for two feasible solutions $\langle s_1,t_1,d_1^\star,f_1^\star\rangle$, $\langle s_2, t_2,d_2^\star,f_2^\star\rangle$, 
\begin{align}\label{eq:costFreightForwarder}
d_1^\star<d_2^* \iff \left(1 + t_1\right) d_1^\star + \left(1 - s_1\right) f_1^\star &> \left(1 + t_2\right) d_2^\star + \left(1 - s_2\right) f_2^\star 
\end{align}
\end{proposition}\
The following proposition shows that, with a fixed budget, the transportation authority cannot worsen its objective by increasing the tax rate. 

\begin{proposition}
\label{prop:Proposition_2}
For a given budget $B$, the driven distance is non-increasing in the tax level $t$, i.e., for two feasible solutions $\langle s_1,t_1,d_1^\star,f_1^\star\rangle$, $\langle s_2,t_2,d_2^\star,f_2^\star\rangle$ with $t_1 < t_2$, it holds that 
$d_1^\star \geq d_2^\star$. 
\end{proposition}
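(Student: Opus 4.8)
The plan is to compare the two feasible solutions through the lens of the lower-level optimality condition \eqref{eq:constr_feasbile_area} and the budget constraint \eqref{eq:constr_budget}, and to combine this with Proposition~\ref{prop:Proposition_1}. Suppose for contradiction that $t_1 < t_2$ but $d_1^\star < d_2^\star$. Then Proposition~\ref{prop:Proposition_1} already tells us that the freight forwarder's cost is strictly lower under solution~1 than under solution~2, i.e.\ $(1+t_1)d_1^\star + (1-s_1)f_1^\star > (1+t_2)d_2^\star + (1-s_2)f_2^\star$. The goal is to show this is incompatible with $t_1<t_2$ once we bring the budget equation into play.

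First I would rewrite the freight forwarder's objective value at a feasible point $\langle s,t,d^\star,f^\star\rangle$ by substituting the budget constraint $s f^\star - t d^\star = B$ (recall $\phi=1$). We have $(1+t)d^\star + (1-s)f^\star = d^\star + f^\star + t d^\star - s f^\star = d^\star + f^\star - B$. So, remarkably, the forwarder's cost at any feasible solution equals $d^\star + f^\star - B$. Hence the strict inequality from Proposition~\ref{prop:Proposition_1} becomes simply $d_1^\star + f_1^\star > d_2^\star + f_2^\star$, i.e.\ $f_1^\star - f_2^\star > d_2^\star - d_1^\star > 0$, so $f_1^\star > f_2^\star$.

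Next I would use the lower-level optimality of each solution against the other as a candidate. Since $(d_2^\star,f_2^\star,x_2^\star)$ is feasible for the lower level (the feasible set $\mathcal{X}$ does not depend on $s,t$), optimality of solution~1 under policy $(s_1,t_1)$ gives $(1+t_1)d_1^\star + (1-s_1)f_1^\star \le (1+t_1)d_2^\star + (1-s_1)f_2^\star$, which simplifies to $(1+t_1)(d_1^\star-d_2^\star) \le (1-s_1)(f_2^\star-f_1^\star)$. With $d_1^\star<d_2^\star$ and $f_1^\star>f_2^\star$ this reads (negative) $\le$ (negative); dividing, $(1+t_1) \ge (1-s_1)\frac{f_1^\star-f_2^\star}{d_2^\star-d_1^\star}$ — wait, I must be careful with signs, so instead I would write it as $(1+t_1)(d_2^\star-d_1^\star)\ge (1-s_1)(f_1^\star-f_2^\star)$. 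Symmetrically, optimality of solution~2 under policy $(s_2,t_2)$ against candidate $(d_1^\star,f_1^\star,x_1^\star)$ gives $(1+t_2)(d_2^\star-d_1^\star)\le (1-s_2)(f_1^\star-f_2^\star)$. Combining, $(1+t_2)(d_2^\star-d_1^\star)\le (1-s_2)(f_1^\star-f_2^\star)$ and $(1-s_1)(f_1^\star-f_2^\star)\le (1+t_1)(d_2^\star-d_1^\star)$.

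The remaining work is to extract a contradiction with $t_1<t_2$, and here I expect the main obstacle: the two chained inequalities involve the unknown subsidy levels $s_1,s_2\in[0,1]$, so they do not immediately collide. The route I would pursue is to also bring in the budget equations for both solutions, $s_1 f_1^\star - t_1 d_1^\star = B = s_2 f_2^\star - t_2 d_2^\star$, to eliminate $s_1$ and $s_2$ and obtain a relation purely in $d_i^\star, f_i^\star, t_i, B$; then divide the resulting inequalities (all the relevant differences $d_2^\star-d_1^\star$ and $f_1^\star-f_2^\star$ being strictly positive under our contradiction hypothesis) to isolate $t_1$ versus $t_2$. I anticipate that after substitution the chain forces $1+t_1 \ge 1+t_2$, i.e.\ $t_1\ge t_2$, contradicting $t_1<t_2$; if the algebra instead leaves a gap, the fallback is to treat the degenerate cases $d_1^\star=d_2^\star$ or $f_1^\star=f_2^\star$ separately (where the argument collapses to a one-variable comparison) and to handle the boundary $s_i\in\{0,1\}$ explicitly. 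Establishing that this elimination genuinely closes — rather than needing an extra monotonicity input — is the crux of the proof.
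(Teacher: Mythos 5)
Your setup is sound and overlaps heavily with the paper's machinery: the two mutual lower-level optimality inequalities $(1+t_1)(d_2^\star-d_1^\star)\geq(1-s_1)(f_1^\star-f_2^\star)$ and $(1+t_2)(d_2^\star-d_1^\star)\leq(1-s_2)(f_1^\star-f_2^\star)$, together with $f_1^\star>f_2^\star$ from the trade-off lemma, are exactly the ingredients of the paper's Case II. But the proof as written has a genuine gap: the decisive step --- eliminating $s_1,s_2$ via the budget equations and extracting the contradiction --- is only announced (``I anticipate that after substitution the chain forces $1+t_1\geq 1+t_2$''), not carried out, and you yourself flag it as the unresolved crux. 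Moreover, your guess at how it resolves is not quite right: dividing the two optimality inequalities does \emph{not} directly yield $t_1\geq t_2$, because when $s_1>s_2$ the two bounds $(1-s_1)(f_1^\star-f_2^\star)$ and $(1-s_2)(f_1^\star-f_2^\star)$ do not collide. (Also, a small slip: Proposition~\ref{prop:Proposition_1} with $d_1^\star<d_2^\star$ gives that solution 1 is \emph{more} expensive for the forwarder, as your displayed inequality correctly states, not ``strictly lower under solution 1.'')

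The good news is that your route does close, with a two-step finish you did not supply. First, subtract the two optimality inequalities to get $(t_1-t_2)(d_2^\star-d_1^\star)\geq(s_2-s_1)(f_1^\star-f_2^\star)$; since the left side is negative and $f_1^\star-f_2^\star>0$, this forces $s_1>s_2$. Second, the budget equality gives $s_1f_1^\star-s_2f_2^\star=t_1d_1^\star-t_2d_2^\star$, whose right side is strictly negative (as $0\leq t_1<t_2$ and $0\leq d_1^\star<d_2^\star$), while $s_1>s_2\geq 0$ and $f_1^\star>f_2^\star\geq 0$ make the left side strictly positive --- a contradiction. The paper proceeds differently: it splits on the sign of $s_1-s_2$, handling $s_1\geq s_2$ by a direct cost-chain argument plus Proposition~\ref{prop:Proposition_1} and $s_1<s_2$ by essentially your two optimality inequalities, thereby never needing the budget-based elimination. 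Your (completed) version is arguably more unified, but as submitted the proof is incomplete at precisely the point you identified as the crux.
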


Note that it is impossible to increase the tax level indefinitely to keep reducing driven distance since the solution should remain feasible, and the subsidy level can be at most 1. We have shown that increasing the tax level reduces the driven distance but increases the freight forwarder's costs. 
On the other hand, increasing the subsidy level, $s_1 < s_2$, at a given budget $B$ \textit{can} result in increased distances, $d_1^\star < d_2^\star$, thereby counteracting the goals of the policymaker, as Example \ref{ex:subsidyUpDistanceUp} shows. 
\begin{example}\label{ex:subsidyUpDistanceUp}
Assume that the budget $B=0$ and that the lower-level problem has only two feasible routing solutions ($\mathcal{X} = \{\langle d_1 = 15, f_1 = 20\rangle, \langle d_2 = 20, f_2 = 5\rangle\}$) due to the time window, vehicle capacity, and other constraints. Under the policy $(s_1=0.5,t_1=2/3)$, the first solution is optimal for the freight forwarder, resulting in a distance 15. However, under the policy $(s_2=0.6,t_2=0.15)$, with a higher subsidy level \textit{but lower tax}, the second solution is optimal for the freight forwarder, resulting in distance 20. 
\end{example}

However, we can show that if the transportation authority selects the tax rate optimally given the subsidy level, the driven distance \textit{does} decrease in the subsidy, as shown by Proposition~\ref{prop:newProp3}. It immediately follows that the transportation authority can minimize distance by fully subsidizing the scheduled line, as formalized in Proposition~\ref{prop:newProp4}.

\begin{proposition}
\label{prop:newProp3}
For a given budget $B$ and subsidy level $s$, let $d^*(s)$ denote the lowest driven distance over all tax rates $t$. Then, $d^*(s)$ decreases monotonically in $s$. 
\end{proposition}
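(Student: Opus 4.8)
The plan is to show that for any two subsidy levels $s_1<s_2$ there is a budget-feasible policy at level $s_2$ whose induced driven distance is at most $d^*(s_1)$; since $d^*(s_2)$ is by definition the smallest driven distance over all budget-feasible policies at level $s_2$, this gives $d^*(s_2)\le d^*(s_1)$, i.e.\ the asserted monotonicity. (Here ``budget-feasible at level $s$'' abbreviates $t\ge 0$ and $s\,f^*(s,t)-t\,d^*(s,t)=B$, with $\phi=1$ as in the statement.) I will use one elementary fact throughout: for a fixed subsidy level $s$, the lower-level optimal distance $d^*(s,t)$ is non-increasing in $t$. To see this, add the lower-level optimality inequalities at two rates $t<t'$; the $f$-terms cancel and simplifying gives $(t-t')\bigl(d^*(s,t)-d^*(s,t')\bigr)\le 0$.

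First I would fix a tax rate $t_1$ attaining $d^*(s_1)$ (attained because $\mathcal{X}$ is finite), and write $(d_1,f_1)$ for the corresponding lower-level optimum, so $d_1=d^*(s_1)$ and $s_1 f_1-t_1 d_1=B$. Then I would keep the tax at $t_1$ and raise the subsidy to $s_2$; let $(d_2,f_2)$ be the resulting lower-level optimum. Since $(d_2,f_2)$ is optimal for the $(s_2,t_1)$ objective and $(d_1,f_1)$ is optimal for the $(s_1,t_1)$ objective, rearranging the two corresponding optimality inequalities yields the chain $(1-s_1)(f_1-f_2)\le(1+t_1)(d_2-d_1)\le(1-s_2)(f_1-f_2)$. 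Comparing the two ends gives $(s_2-s_1)(f_1-f_2)\le 0$, so $f_2\ge f_1$; substituting this into the right-hand inequality gives $(1+t_1)(d_2-d_1)\le 0$, so $d_2\le d_1$. Intuitively, at an unchanged tax a larger subsidy shifts freight off the road and onto the scheduled line.

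Next I would note that, because $f_2\ge f_1$ and $d_2\le d_1$, the net subsidy at $(s_2,t_1)$ satisfies $s_2 f_2-t_1 d_2\ge s_2 f_1-t_1 d_1\ge s_1 f_1-t_1 d_1=B$. If this holds with equality, then $(s_2,t_1)$ is itself budget-feasible, with driven distance $d_2\le d_1=d^*(s_1)$, and we are done. Otherwise $s_2 f_2-t_1 d_2>B$ and we restore budget balance by \emph{raising} the tax: put $g(t):=s_2 f^*(s_2,t)-t\,d^*(s_2,t)$ for $t\ge t_1$; on each of the finitely many intervals where the lower-level optimum is constant, $g$ is affine and strictly decreasing (slope $-d^*(s_2,t)<0$); where the optimum changes, $g$ does not jump downward (there the forwarder switches to a solution with strictly smaller road distance and, by the indifference condition, strictly larger scheduled-line flow); and $g(t)\to-\infty$, since every feasible routing uses at least a fixed positive road distance while $f$ is bounded. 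Since $g(t_1)>B$, $g\to-\infty$, and $g$ decreases continuously within pieces with only upward jumps between them, $g$ attains the value $B$ at some $t_2>t_1$, so $(s_2,t_2)$ is budget-feasible. Finally, monotonicity of $d^*(s_2,\cdot)$ in the tax gives $d^*(s_2)\le d^*(s_2,t_2)\le d^*(s_2,t_1)=d_2\le d_1=d^*(s_1)$, which proves the claim (and, as a by-product, shows the feasible set at level $s_2$ is nonempty whenever it is at level $s_1$).

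The main obstacle is this rebalancing step. Raising the subsidy generically destroys the equality $s\,f-t\,d=B$, and since the lower-level optimum --- hence the net-subsidy function $g$ --- is only piecewise continuous in $t$, one cannot simply apply the intermediate value theorem to recover feasibility: a priori $g$ could jump across the value $B$. What rules this out is the direction of the jumps. At a threshold $\tau$, the indifference condition reads $(1+\tau)(d^--d^+)=(1-s_2)(f^+-f^-)$, where $(d^-,f^-)$ and $(d^+,f^+)$ are the optima just below and just above $\tau$; because $d^*(s_2,\cdot)$ is non-increasing and the optimum genuinely changes at $\tau$, one has $d^->d^+$, so the left-hand side is positive, forcing $f^+>f^-$ and hence an upward jump in $g$. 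This monotone-jump property, together with $g(t)\to-\infty$, is precisely what yields a budget-feasible tax rate above $t_1$; it is also the one place where finiteness of $\mathcal{X}$ and a strictly positive lower bound on the road distance are genuinely needed.
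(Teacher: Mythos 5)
Your proof is correct, but it takes a genuinely different route from the paper's. The paper argues in one shot: it takes the distance-minimizing feasible solution $\langle s,t^*,d^\star(s),f^\star(s)\rangle$, defines the new tax $t'=(s'f^\star(s)-B)/d^\star(s)$ so that the \emph{old} routing exactly balances the budget at the higher subsidy $s'$, and then verifies through a chain of algebraic identities (substituting the budget equation twice and invoking lower-level optimality once) that under $(s',t')$ the old routing is still weakly preferred to every alternative with larger driven distance, whence $d^\star(s')\le d^\star(s)$. You instead decompose the move into two monotone steps: first raise the subsidy at fixed tax, extracting $d_2\le d_1$ and $f_2\ge f_1$ from the pair of optimality inequalities (which creates a budget surplus), then raise the tax to burn off the surplus, using monotonicity of $d^*(s,\cdot)$ in $t$ together with a piecewise intermediate-value argument on the net-subsidy map $g(t)=s_2 f^*(s_2,t)-t\,d^*(s_2,t)$. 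What the paper's construction buys is brevity and independence from structural assumptions: it needs neither finiteness of $\mathcal{X}$, nor a positive lower bound on the road distance, nor your upward-jump analysis of $g$. What your version buys is an explicit certificate of budget feasibility for the final policy: the paper's argument guarantees the budget balances only \emph{if the forwarder actually selects the old routing}, and is silent on the case where a strictly cheaper, lower-distance routing exists at $(s',t')$ (then the net subsidy exceeds $B$ and that policy is not upper-level feasible); your rebalancing step via $g$ closes exactly that gap, at the price of the extra hypotheses you correctly flag. Both arguments ultimately rest on the same two enabling facts --- the trade-off between $d$ and $f$ among lower-level optima (Lemma~\ref{lem:tradeoff}) and monotonicity of $d^*$ in $t$ --- so the difference is one of decomposition and rigor at the feasibility step rather than of substance.
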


\begin{proposition}
    
\label{prop:newProp4}
Let $f^\text{full}$ denote the scheduled line costs of the freight forwarder under the policy $(s=1,t=0)$. If $B\leq f^\text{full}$, there exists an optimal where $s=1$. If $B>f^\text{full}$, the problem is infeasible. 
\end{proposition}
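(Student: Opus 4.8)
The plan is to leverage Proposition~\ref{prop:newProp3} to reduce the claim to analyzing the single policy $(s=1,t=0)$ and checking feasibility. First I would observe that by Proposition~\ref{prop:newProp3}, the map $s\mapsto d^*(s)$ (the best achievable driven distance over all admissible tax rates for a fixed subsidy) is monotonically decreasing on $[0,1]$, so if a feasible policy exists with $s=1$, it attains the infimum of the objective \eqref{eq:objective} over all feasible $(s,t)$; hence an optimal solution with $s=1$ exists whenever $s=1$ is feasible for \emph{some} tax rate $t$. So the entire statement comes down to a feasibility analysis: for which budgets $B$ does there exist $t\ge 0$ making $(s=1,t)$ satisfy the budget constraint \eqref{eq:constr_budget} together with the lower-level optimality condition \eqref{eq:constr_feasbile_area}?

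The key step is to pin down the freight forwarder's response at $s=1$. When $s=1$, the lower-level objective in \eqref{eq:constr_feasbile_area} becomes $(1+t)d + 0\cdot f = (1+t)d$, i.e., the forwarder minimizes driven distance alone (scaled by a positive constant $1+t$), independently of $t$. Let $d^{\min}$ be that minimum distance and let $f^{\text{full}}$ be the associated scheduled-line cost — this is exactly the quantity named in the statement, the scheduled-line cost under $(s=1,t=0)$ (and, since the forwarder's chosen solution does not change with $t$ when $s=1$, it is the scheduled-line cost for \emph{every} $t\ge 0$ at $s=1$, using the deterministic tie-breaking assumption). The budget constraint \eqref{eq:constr_budget} at $s=1$ then reads $1\cdot f^{\text{full}} - t\,d^{\min} = B$, i.e., $t = (f^{\text{full}} - B)/d^{\min}$.

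From here the two cases fall out. If $B \le f^{\text{full}}$, then $t = (f^{\text{full}}-B)/d^{\min} \ge 0$ is a legitimate tax rate (assuming $d^{\min}>0$; the degenerate case $d^{\min}=0$ forces $f^{\text{full}}=B$ and any $t$ works, or should be handled as a trivial sub-case), so $(s=1,t)$ is feasible and, by the monotonicity argument above, optimal. If $B > f^{\text{full}}$, then at $s=1$ the required tax would be negative, which violates \eqref{eq:constr_domain}; and for any $s<1$ the total subsidy spent is $s f^*(s,t) \le f^*(s,t) \le f^{\text{full}}$ — here one must argue that the forwarder's scheduled-line cost can never exceed $f^{\text{full}}$, which holds because at any $(s,t)$ the forwarder could instead choose the distance-minimizing solution, and any solution the forwarder actually prefers has $(1-s)f \le (1-s)f^{\text{full}} + $ (distance terms that only help), so $f \le f^{\text{full}}$ — hence $s f^*(s,t) - t\phi d^*(s,t) \le f^{\text{full}} < B$, and the budget constraint cannot be met for any admissible $(s,t)$, so the problem is infeasible.

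The main obstacle I anticipate is the claim used in the infeasibility case that $f^*(s,t) \le f^{\text{full}}$ for all admissible $(s,t)$: this requires a careful revealed-preference comparison at the lower level between the forwarder's chosen solution and the distance-minimizing solution, and one must be slightly careful about ties and about whether $s=1$ versus $s<1$ changes which solution minimizes distance. A secondary subtlety is the boundary/degenerate behavior when $d^{\min}=0$ or when $f^{\text{full}}=0$, which should be dispatched with a short remark rather than allowed to clutter the main argument.
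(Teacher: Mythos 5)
Your proof is correct and follows essentially the same route as the paper's: the same tax formula $t=(f^\text{full}-B)/d^\text{full}$ for the case $B\le f^\text{full}$, and the same budget-constraint chain $B=sf^*-t\phi d^*\le sf^*\le f^*$ for infeasibility. The only differences are cosmetic — you justify optimality of $s=1$ via the monotonicity of Proposition~\ref{prop:newProp3} where the paper simply observes that $d^\text{full}$ is a global lower bound on the driven distance, and you make explicit the revealed-preference argument for $f^*(s,t)\le f^\text{full}$ (and the degenerate case $d^\text{full}=0$) that the paper's proof leaves implicit.
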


After identifying the structure of an optimal policy, we now turn to the efficiency of the budget. We show that the freight forwarder's routing decision is independent of the budget if $s=1$ and that an increase in budget directly and by an equal amount decreases the freight forwarder's cost (Proposition \ref{prop:Proposition_4}). 

\begin{proposition}    
\label{prop:Proposition_4}
Under the optimal policy, increasing the budget does not influence the freight forwarder's optimal routing decision but decreases its total cost, i.e., for any two optimal policies with $s_1 = s_2 = 1$ with their corresponding budget $B_1 < B_2 \leq f^\text{full}$ and total freight forwarder cost $C_1, C_2$, it holds that
\begin{align*}
    B_1 + C_1 = B_2 + C_2
\end{align*}
\end{proposition}
\ifinclude
\proof{} 
Let $\langle s_1,t_1,d_1^\star,f_1^\star\rangle$ and $\langle s_2,t_2,d_2^\star,f_2^\star\rangle$ be two optimal solutions for the policy maker's problem 
with $s_1, s_2$ the subsidy in either solution,
$t_1, t_2$ the associated tax level, $d_1^\star$ and $d_2^\star$ the distance traveled in either solution, and $f_1^\star$ and $f_2^\star$ the total cost of putting freight on the scheduled line, such that $B_1 < B_2$ are their corresponding given budgets.

Let's assume that with an increasing budget, the freight forwarder decreases (increases) the total distance $d_1^\star < d_2^\star$ ($d_1^\star > d_2^\star$). Since both solutions were solved to optimality, i.e., 
\begin{align*}
\left(1 + t_1\right) d_1^\star + \left(1 - s_1\right) f_1^\star &\leq  \left(1 + t_1\right) d_2^\star + \left(1 - s_1\right) f_2^\star \\ 
\left(1 + t_2\right) d_2^\star + \left(1 - s_2\right) f_2^\star &\leq \left(1 + t_2\right) d_1^\star + \left(1 - s_2\right) f_1^\star 
\end{align*}
and $s_1 = s_2 = 1$, we obtain $d_1^\star = d_2^\star$, which, assuming that the lower level problem is solved deterministically, results in $f_1^\star = f_2^\star$. From the budget equation \eqref{eq:constr_budget}, we conclude that $t_1 > t_2$ since $d^\star$, $f^\star$ and $s$ are equal. 

Since distances and usage of the scheduled line are equal, we obtain
\begin{align*}
d_1^\star + f_1^\star  &= d_2^\star + f_2^\star.  \\
\intertext{Subtracting and adding the corresponding budgets \eqref{eq:constr_budget} on both sides yields}
\underbrace{d_1^\star + f_1^\star - (s_1 f_1^\star - t_1 d_1^\star)}_{C_1} + B_1 &= \underbrace{d_2^\star + f_2^\star - (s_2 f_2^\star - t_2 d_2^\star)}_{C_2} + B_2 
\end{align*}
which is equivalent to the original statement. 
\fi

\section{Solution algorithm}\label{sec:Algorithm}

Given a policy $(s,t)$, we solve the lower-level problem using the ALNS algorithm for PDPTW-SL developed in \citep{ghilas2016adaptive}. We are aware that a Branch-and-Price algorithm for PDPTW-SL is proposed by  \cite{ghilas2018branch}, but use ALNS to be able to solve larger problem sizes. Since the orders transported by the freight forwarder may vary daily, we aggregate the results of various demand scenarios to compute the total subsidy and tax revenue. 

The optimal policy for the transportation authority is computed according to Proposition~\ref{prop:newProp4}. We also perform tests that do not use a full subsidy. Given some subsidy level $s<1$, we compute the tax rate that satisfies the budget constraint using a bisection search. We start with initial guesses for the tax rate that, respectively, undershoot and overshoot the budget constraint and continue refining these guesses until the constraint is sufficiently met. For an algorithm overview, refer to Algorithm \ref{alg:bisection}. 

In Section~\ref{sec:PropertiesOptimal}, we showed that multiple feasible tax rates may exist for a given subsidy level, implying that Algorithm~\ref{alg:bisection} may result in a suboptimal solution. However, the algorithm does perform well in numerical experiments, as we will see in the next section.  
\begin{algorithm}
\caption{Tax rate search}\label{alg:bisection}
\begin{algorithmic}[1]
    \State \textbf{Initialize} variables $a$, $b$, and $\epsilon$ (where $a$ and $b$ define the tax interval, and $\epsilon$ is the tolerable error).
    \State \textbf{Input} initial guesses $x_0$ and $x_1$ such that $f(x_0) \cdot f(x_1) < 0$. ($f(\cdot)$ - solving scenarios of lower problems using ALNS given a tax value and return average budget)
    \State \textbf{Initialize} maximum number of iterations $N$.
    \State $n \gets 0$ \Comment{Initialize iteration counter}
    \While{$\left| f\left(\frac{{x_0 + x_1}}{2}\right)\right| > \epsilon$ and $n < N$ and $f\left(\frac{{x_0 + x_1}}{2}\right) \neq B $} \Comment{B - Total budget}
        \State $x_2 \gets \frac{{x_0 + x_1}}{2}$
        \If{$f(x_0) \cdot f(x_2) < 0$}
            \State $x_1 \gets x_2$
        \Else
            \State $x_0 \gets x_2$
        \EndIf
        \State $n \gets n + 1$ \Comment{Increment iteration counter}
    \EndWhile
\end{algorithmic}
\end{algorithm}
\section{Numerical Experiments}\label{sec:NumericalExp}
This section presents the findings from a series of numerical experiments with the bi-level model. We describe the experimental design and then discuss the benefits and consequences of optimal policies for transportation authorities and freight forwarders. Moreover, we conduct a sensitivity analysis to assess the impact of the scatteredness of order and scheduled line frequency on optimal policy settings, transportation authority, and freight forwarder objectives. Moreover, we explore the trade-off between total driven distance and the cost of freight forwarders for policies with different subsidy levels. Finally, we conclude the section with a case study based on the city of Berlin.
\subsection{Experimental Design}
We derive our instances from the largest instances used by \cite{ghilas2016adaptive}, with 100 transportation and three scheduled line stations. Compared to the original instances, we assume all stations also serve as vehicle depots. We generate instances that vary across three dimensions: order location geography, order allocation, and time window length. Orders are always sampled within a certain radius of the three stations. Still, with the\textit{Intercity} (Inter) geography, the scheduled line stations are twice as far apart as with the \textit{Metropolitan} (Metro) cases, as illustrated in Figure~\ref{fig:gen_instance}.

\begin{figure}
    \centering
    \begin{subfigure}[b]{0.48\linewidth}        
        \centering
        \includegraphics[width=\linewidth]{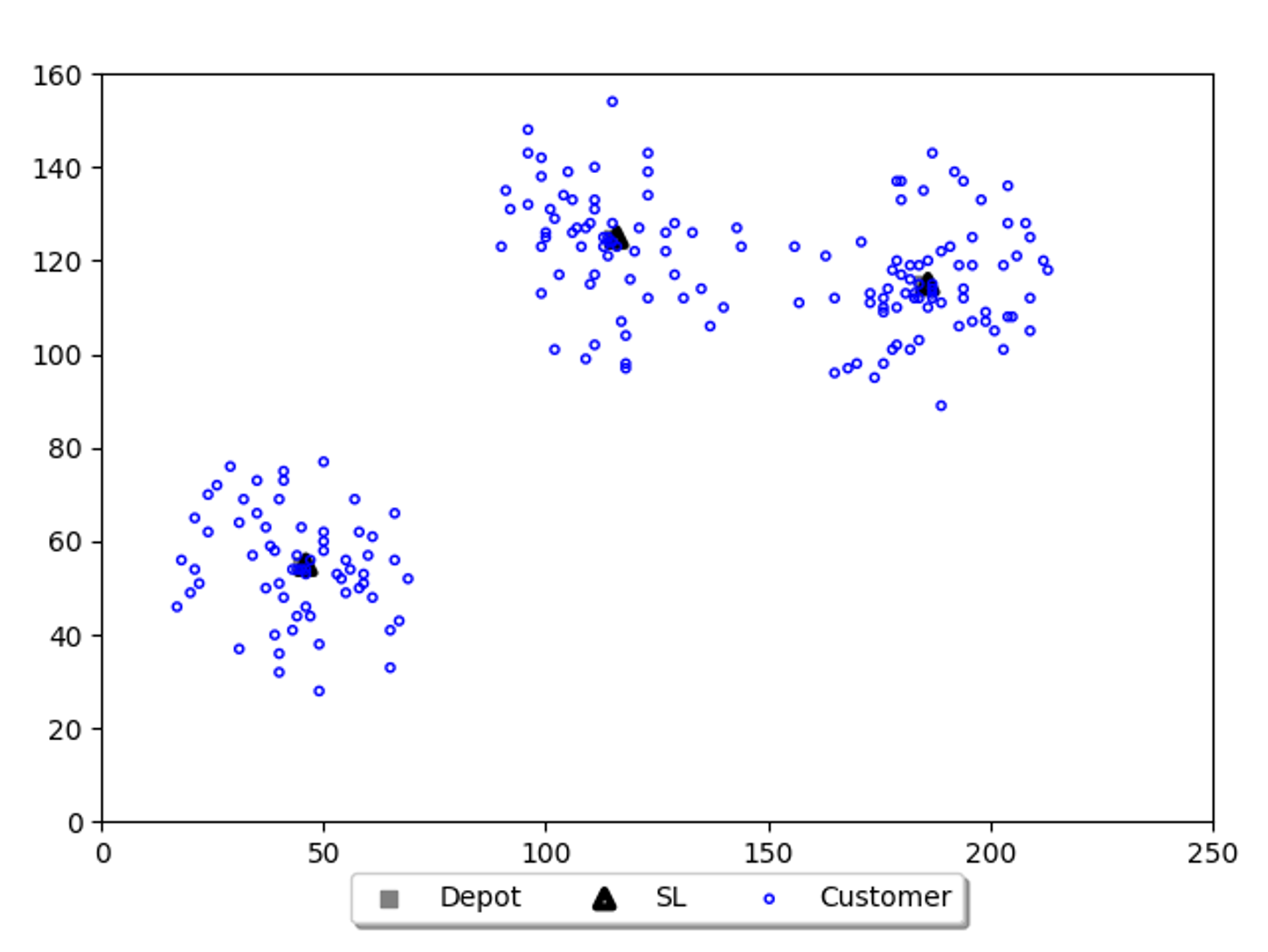}
        \caption{Intercity}
        \label{fig:intercity}
    \end{subfigure}
    \begin{subfigure}[b]{0.48\linewidth}        
        \centering
        \includegraphics[width=\linewidth]{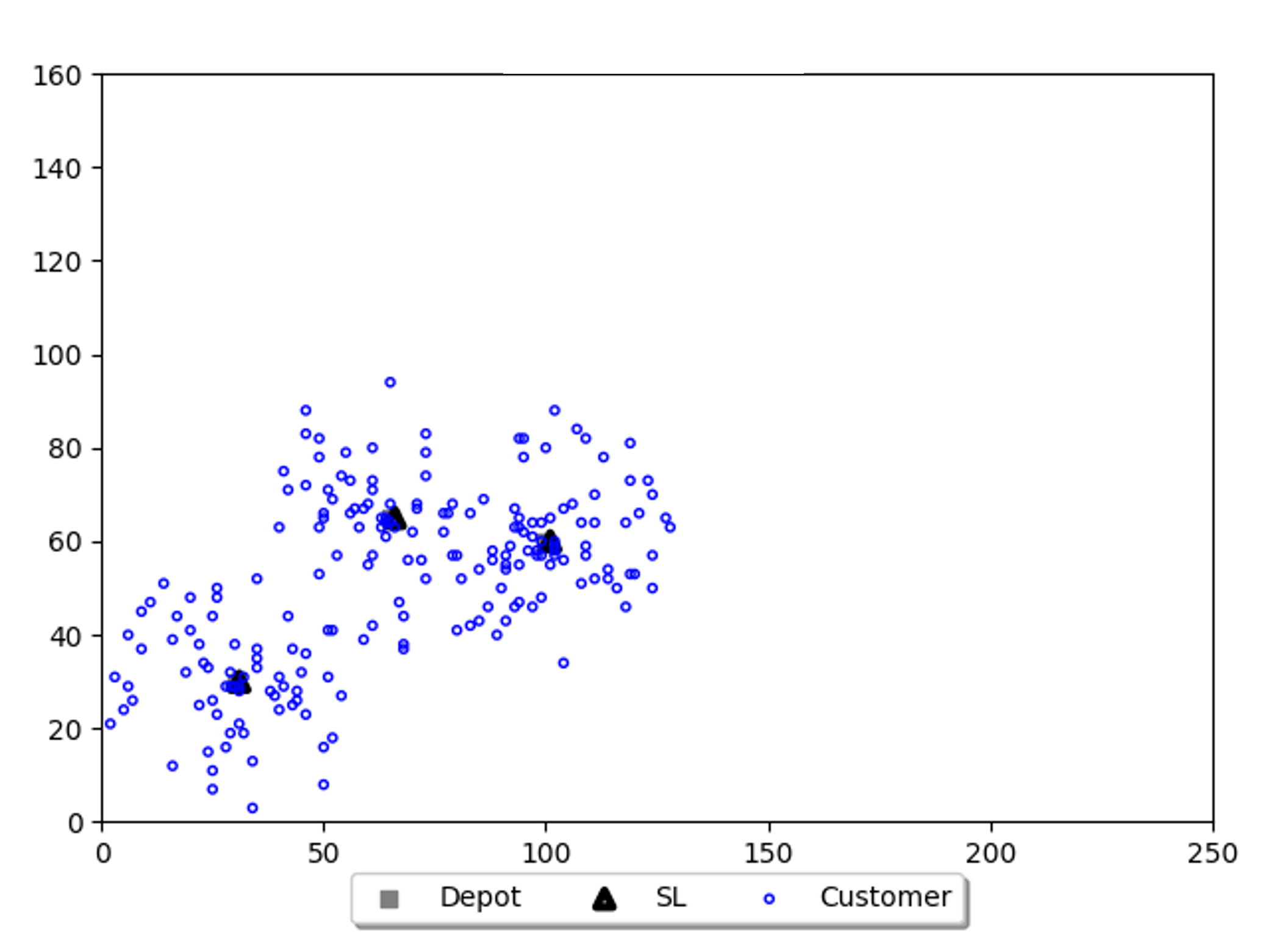}
        \caption{Metropolitan}
        \label{fig:metro}
    \end{subfigure}
    \caption{Different order location geographies}
    \label{fig:gen_instance}
\end{figure}


‘Order allocation’ defines the process of matching pickup and delivery locations. We examine two cases: ‘Different’ (Diff) and ‘Random’ (Rand). In the' Different' scenario, we pair pickup and delivery locations from whose closest scheduled line station is different. Conversely, ‘Random’ involves a selection process for pickup and delivery locations. We also distinguish the time window length into two cases: ‘Tight’ (T) and ‘Wide’ (W), representing 45 and 60 time units, respectively.

To ensure fair cost comparison, the scheduled line cost per demand varies based on the distance between transferred stations and the vehicle cost to cover the same distance, preventing the scheduled line service from being significantly cheaper than the vehicle cost. The cost per unit distance is 0.25, and the cost per unit of freight on the train is 0.1 per unit distance.

\subsection{Optimal Policies for the Transportation Authority and the Freight Forwarder}
We identify the benefits and consequences of the optimal policy by comparing the results under the optimal policy to the base scenario where there is no intervention.  Table \ref{tab:optOpCondition} compares the base and optimal policy on modal shift level and objective of transportation authority and freight forwarder. Since the authority's budget is zero, the tax columns show the required tax level to achieve the total scheduled line subsidy. Other columns show the average value of performance metrics obtained from ten randomly generated scenarios. This approach ensures the metrics are not overestimated and account for any variability due to scenario-specific errors.


\begin{table}[h]
\centering

\caption{Base versus optimal (Opt) policy under budget-neutral conditions ($B=0$)}
\label{tab:optOpCondition}
\scalebox{0.8}{
\begin{tabular}{lrrrrrrrrrrrr}
\hline
 & \multicolumn{3}{c}{Driving distance} &  & \multicolumn{2}{c}{\%modal shift} &  & \multicolumn{3}{c}{Operation cost} &  &  \\ \cline{2-4} \cline{6-7} \cline{9-11}
Instance & \multicolumn{1}{c}{Base} & \multicolumn{1}{c}{Opt} & \multicolumn{1}{c}{\%} &  & \multicolumn{1}{c}{Base} & \multicolumn{1}{c}{Opt} &  & \multicolumn{1}{c}{Base} & \multicolumn{1}{c}{Opt} & \multicolumn{1}{c}{\%} &  & \multicolumn{1}{c}{tax} \\ \cline{1-4} \cline{6-7} \cline{9-11} \cline{13-13} 
Inter-Diff-T & 7094.0 & 6414.4 & -9.6 &  & 8.9 & 40.1 &  & 1968.0 & 2451.8 & 24.6 &  & 0.53 \\
Inter-Diff-W & 6290.0 & 5409.0 & -14.0 &  & 9.4 & 59.7 &  & 1772.2 & 2573.8 & 45.2 &  & 0.90 \\
Inter-Rand-T & 6406.3 & 5785.5 & -9.7 &  & 6.2 & 28.4 &  & 1728.5 & 2035.3 & 17.8 &  & 0.41 \\
Inter-Rand-W & 5592.4 & 4755.2 & -15.0 &  & 10.0 & 46.1 &  & 1601.3 & 2161.9 & 35.0 &  & 0.82 \\
Metro-Diff-T & 4481.4 & 4253.6 & -5.1 &  & 8.7 & 39.2 &  & 1210.3 & 1473.8 & 21.8 &  & 0.39 \\
Metro-Diff-W & 4105.2 & 3840.5 & -6.4 &  & 6.1 & 49.2 &  & 1091.0 & 1483.4 & 36.0 &  & 0.55 \\
Metro-Rand-T & 4170.4 & 3990.1 & -4.3 &  & 6.4 & 29.7 &  & 1109.3 & 1304.3 & 17.6 &  & 0.31 \\
Metro-Rand-W & 3809.9 & 3570.8 & -6.3 &  & 5.7 & 43.1 &  & 1009.7 & 1344.8 & 33.2 &  & 0.51 \\ \hline
\end{tabular}}
\end{table}
\begin{result}
The optimal policy results in high road distance savings due to substantial freight modal shifts, particularly in the wide time window and distinct order pair instances.
\end{result}

Table \ref{tab:optOpCondition} shows that the optimal policy reduces driving distances by 4.3 \% to 12.5 \%  across all instances. This reduction is attributed to the increased modal shift from road vehicles to the scheduled line. Additionally, freight forwarders can achieve greater shifts when the time window is wide, and the pickup and delivery nodes are in different clusters. A wide time window provides the extra travel time needed for the modal shift. Furthermore, when order pairs are in different clusters, the distances are greater, leading to higher cost savings from the modal shift.

\begin{table}[h]
\centering
\caption{Max vehicle capacity and \# Vehicles under Base versus optimal (Opt) policy under budget-neutral conditions ($B=0$)}
\label{tab:optOpCondition2}
\scalebox{0.8}{
\begin{tabular}{lrrrrrr}
\hline
 & \multicolumn{3}{c}{\begin{tabular}[c]{@{}c@{}}Max capacity \\ utilization\end{tabular}} & \multicolumn{1}{l}{} & \multicolumn{2}{c}{\#Vehicles} \\ \cline{2-4} \cline{6-7} 
Instance & \multicolumn{1}{c}{Base} & \multicolumn{1}{c}{Opt} & \multicolumn{1}{c}{\%} & \multicolumn{1}{l}{} & \multicolumn{1}{c}{Base} & \multicolumn{1}{c}{Opt} \\ \cline{1-4} \cline{6-7} 
Inter-Diff-T & 9.5 & 9.4 & -1.1 &  & 21.9 & 26.8 \\
Inter-Diff-W & 11.0 & 9.4 & -14.5 &  & 19.6 & 27.8 \\
Inter-Rand-T & 8.3 & 7.9 & -4.8 &  & 20.3 & 24.2 \\
Inter-Rand-W & 10.3 & 9.0 & -12.6 &  & 18.5 & 24.4 \\
Metro-Diff-T & 9.5 & 9.0 & -5.3 &  & 17.3 & 24.7 \\
Metro-Diff-W & 11.4 & 10.1 & -11.4 &  & 15.0 & 24.7 \\
Metro-Rand-T & 9.0 & 8.8 & -2.2 &  & 15.9 & 21.3 \\
Metro-Rand-W & 10.6 & 9.4 & -11.3 &  & 14.9 & 22.9 \\ \hline
\end{tabular}}
\end{table}

\begin{result}
Although the optimal policy requires more vehicles, freight forwarders can utilize smaller ones.
\end{result}

Table 2 shows the maximum load and the number of vehicles for each instance under both the base and optimal policies. Under the optimal policy, freight forwarders use more vehicles to accommodate the increased modal shift. This increase may be due to time windows, capacity, and spatial patterns of requests. This finding aligns with \cite{ghilas2016adaptive}, which observed that instances with higher modal shifts, such as clustered instances, required more vehicles under the PDPTW-SL scheme. However, the maximum vehicle capacity utilization decreases, especially when the time window is wide and the instances involve different clusters. Consequently, freight forwarders can use smaller vehicles.

\begin{result}
Under the optimal policy, freight forwarders face higher operational costs due to increased tax levels, particularly in instances with wide time windows.
\end{result}

As shown in Table \ref{tab:optOpCondition}, the percentage increase in operational costs is higher with elevated tax levels, ranging from 17.6 \% to 46.1 \%. This increase is quite high in wide time window settings, where the modal shift is high. However, the transportation authority can offset this increase using its budget (Proposition \ref{prop:Proposition_4}).

\subsection{Order Scatteredness and Scheduled Line Services Frequency}

The effectiveness of the PDPTW-SL scheme is influenced by the spatial distribution of requests and the configuration of scheduled line services \citep{ghilas2016pickup}. Consequently, it is important to assess the impact of these factors on optimal policy implementation. This section evaluates the effects of order scatteredness and scheduled line frequency on policy settings and stakeholders' objectives. From Table \ref{tab:optOpCondition}, we observe the highest difference in the modal shift between the intercity and metropolitan with the different clusters and wide time windows. Therefore, we choose this instance for further analysis.

Order scatteredness refers to distances between the pickup and delivery points. We vary the scatteredness level ($k$) by multiplying the original coordinates by $(k/2+0.5)$. The Intercity and Metropolitan cases correspond to $k$ equal 1 and 0, respectively.  For each scatteredness level, we generate 10 sets of 10 scenarios. 

Figure \ref{fig:SensitivityGeo} illustrates the impact of order scatteredness by comparing the performance of base and optimal policies.
Each point represents the average value of the performance metrics from the 10 sets of instances. We found that instances become infeasible if $k>$1.2 due to the time windows. As order scatteredness increases, the optimal modal shift decreases from 98.3 \% to 38.3 \% due to the time window limit. As orders are farther apart, some orders are not feasible for a detour via transferred nodes. On the other hand, under the base policy, the modal shift gradually increases, becomes levels off, and then decreases in the end. In the first phase, it is likely that as the orders scatter, the modal shift provides cost savings until the time window becomes a limitation. Despite this decrease in the modal shift, the distance saving increases until the scatteredness level of 0.8, after which the saving decreases. For the policy settings, the tax trends resemble the optimal modal shift trend. In addition, the system's total cost increases with the scatteredness level. Under the optimal policy, the rate of increased cost seems to be slower with the decreasing modal shift.

\begin{figure}[h!]
    \centering
    
        \begin{subfigure}[b]{0.49\textwidth}
        \centering
        \begin{adjustbox}{max width=\textwidth}
        \begin{tikzpicture}
        \begin{axis}[
            xlabel={Scatteredness},
            ylabel={\% Modal Shift},
            xmin=0, xmax=1.2,
            ymin=0, ymax=100,
            xtick={0,0.2,0.4,0.6,0.8, 1,1.2},
            ytick={0,20,...,100},
            legend pos=north east,
            ymajorgrids=true,
            grid style=dashed,
        ]
        \addplot[
            color=blue,
            mark=*,
            sharp plot,
            ]
            coordinates {
(0,1.19975131665742)
(0.2,2.06653510032055)
(0.4,4.1926405680628)
(0.6,6.23247045161245)
(0.8,11.3345165701958)
(1,11.2298528757981)
(1.2,7.75448250601095)
            };
        \addplot[
            color=orange,
            mark=*,
            sharp plot,
            ]
            coordinates {
(0,96.4926593997625)
(0.2,96.7776741274459)
(0.4,94.225665237957)
(0.6,87.9227891185136)
(0.8,75.3765103254051)
(1,58.9809723018312)
(1.2,38.3408391549258)
            };
        \legend{Base, Opt}
        \end{axis}
        \end{tikzpicture}
        \end{adjustbox}
        \caption{Scatteredness - \% Modal Shift}
    \end{subfigure}
     \hfill
    \begin{subfigure}[b]{0.49\textwidth}
        \centering
        \begin{adjustbox}{max width=\textwidth}
        \begin{tikzpicture}
        \begin{axis}[
            xlabel={Scatteredness},
            ylabel={Total Driving Distance (x1000)},
            xmin=0, xmax=1.2,
            ymin=1.5, ymax=8.1,
            xtick={0,0.2,0.4,0.6,0.8, 1, 1.2},
            ytick={1.5, 2.5,...,7.5 },
            legend pos=north west,
            ymajorgrids=true,
            grid style=dashed,
        ]
        \addplot[
            sharp plot,
            color=blue,
            mark=*,
        ]
        coordinates {
(0,2.10425516056215)
(0.2,2.69429837092931)
(0.4,3.40148965244647)
(0.6,4.19013861886712)
(0.8,5.08288845087076)
(1,6.24238818124906)
(1.2,8.02163348186987)
        };
        \addplot[
            sharp plot,
            color=orange,
            mark=*,
        ]
        coordinates {
(0,1.6962901131671)
(0.2,2.11126304002178)
(0.4,2.6593710641595)
(0.6,3.25635396767146)
(0.8,4.18223388088091)
(1,5.35612132161892)
(1.2,7.18741036717496)
        };
        \legend{Base, Opt}
        \end{axis}
        \end{tikzpicture}
        \end{adjustbox}
        \caption{Scatteredness - Total Driving Distance}
    \end{subfigure}
    
    \vspace{1cm}
    
       \begin{subfigure}[b]{0.49\textwidth}
        \centering
        \begin{adjustbox}{max width=\textwidth}
        \begin{tikzpicture}
        \begin{axis}[
            xlabel={Scatteredness},
            ylabel={Tax},
            xmin=0, xmax=1.2,
            ymin=0.4, ymax=2.5,
            xtick={0,0.2,...,1.2},
            ytick={0.4, 0.8, ..., 2.6},
            legend pos=north west,
            ymajorgrids=true,
            grid style=dashed,
        ]
        \addplot[
            color=blue,
            mark=*,
            sharp plot,
            ]
            coordinates {
(0,2.45060767189936)
(0.2,2.37290746417275)
(0.4,2.1461869785992)
(0.6,1.88058323682267)
(0.8,1.40514673270738)
(1,0.941257985333615)
(1.2,0.48796847655174)
            };
        \end{axis}
        \end{tikzpicture}
        \end{adjustbox}
        \caption{Scatteredness - Tax}
    \end{subfigure}
    \hfill
    \begin{subfigure}[b]{0.49\textwidth}
        \centering
        \begin{adjustbox}{max width=\textwidth}
            
        \begin{tikzpicture}
        \begin{axis}[
            xlabel={Scatteredness},
            ylabel={Total Cost of the System(x1000)},
            xmin=0, xmax=1.2,
            ymin=0.5, ymax=2.8,
            xtick={0,0.2,0.4,0.6,0.8, 1,1.2},
            ytick={1,1.5,..., 2.5},
            legend pos=north west,
            ymajorgrids=true,
            grid style=dashed,
        ]
        \addplot[
            sharp plot,
            color=blue,
            mark=*,
        ]
        coordinates {
(0,0.537321233127331)
(0.2,0.697274020873315)
(0.4,0.909029705420678)
(0.6,1.15003158205593)
(0.8,1.48608904355631)
(1,1.80909819162004)
(1.2,2.19193750996833)
        };
        \addplot[
            sharp plot,
            color=orange,
            mark=*,
        ]
        coordinates {
(0,1.46330652395653)
(0.2,1.78004320189908)
(0.4,2.09165336690269)
(0.6,2.34497817530704)
(0.8,2.51380486786007)
(1,2.59898734880068)
(1.2,2.67327978076314)
        };
        \legend{Base, Opt}
        \end{axis}
        \end{tikzpicture}
        \end{adjustbox}
        \caption{Scatteredness - Total cost of the system}
    \end{subfigure}

    \caption{Impact of order scatteredness on the performances of base and optimal policies}
    \label{fig:SensitivityGeo}
\end{figure}
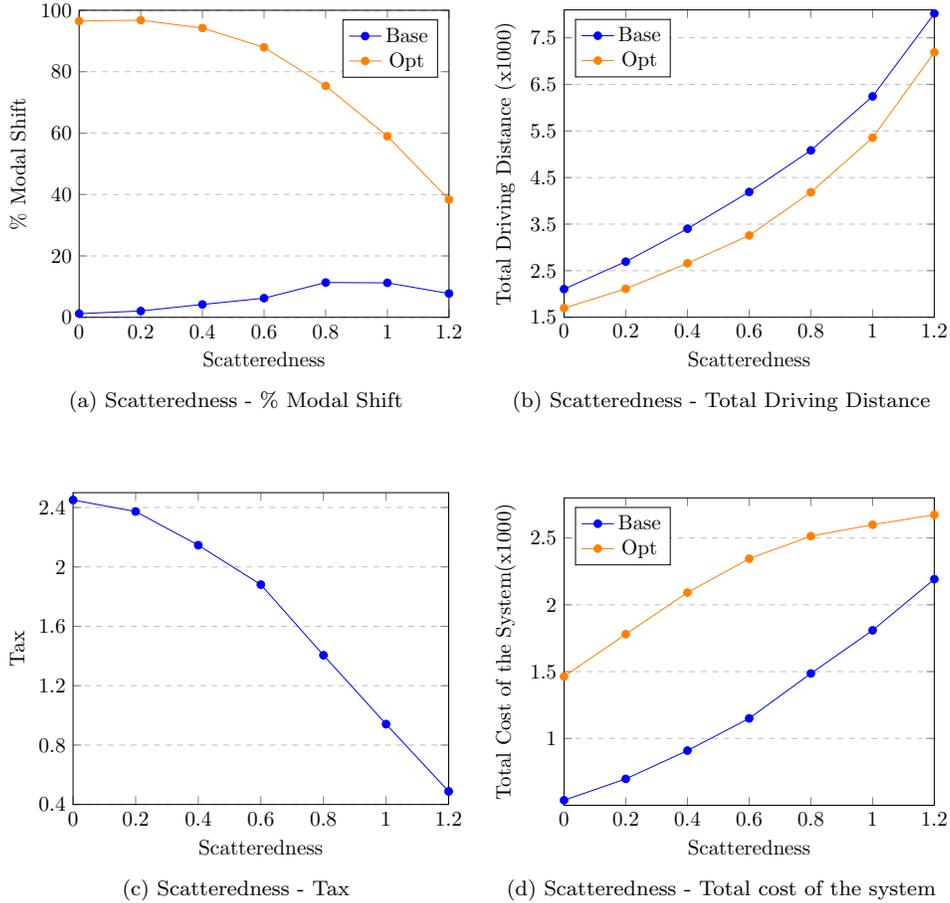

Next, we vary the frequency of scheduled line services to assess their impact on policy settings and stakeholder objectives. Similar to the previous sensitivity analysis, we use the same instance, varying scheduled line frequency from 1 to 10 services per hour. For each frequency level, we generate 10 sets of instances with ten scenarios each. Figure \ref{fig:SensitivityFreq} shows the results for all performance measures, with each point representing the average value of the instances.

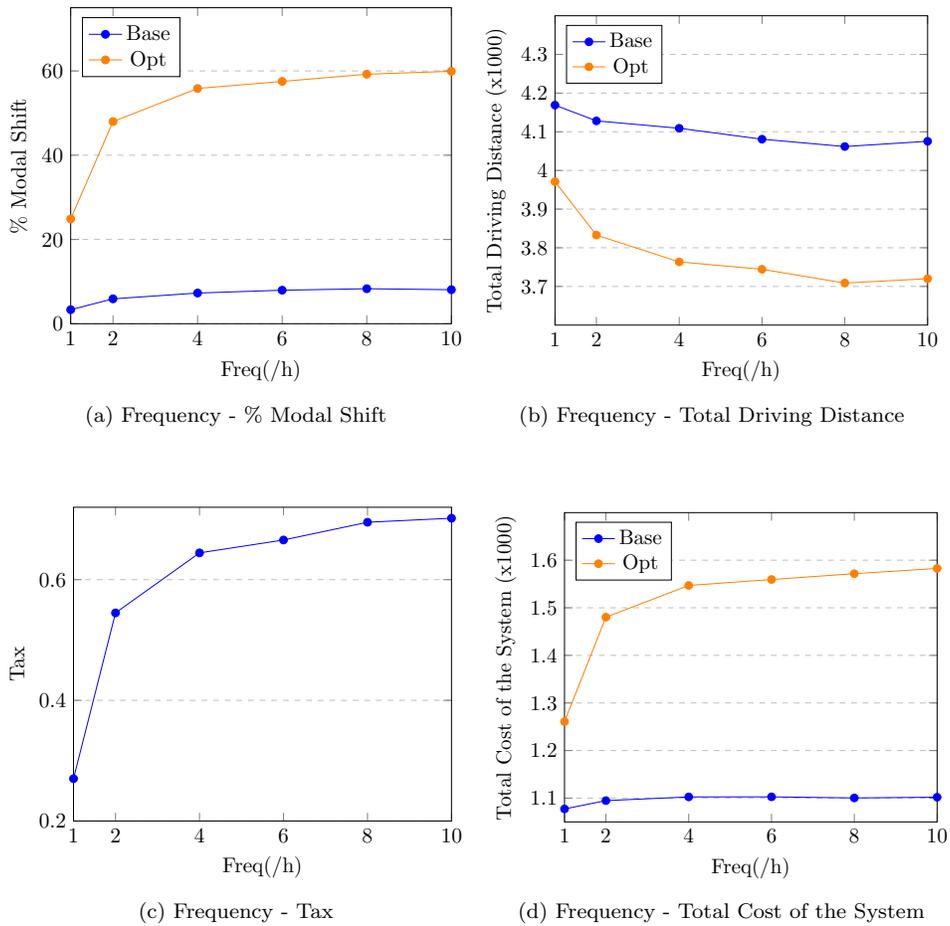
\begin{figure}[h!]
    \centering
    \begin{subfigure}[b]{0.49\textwidth}
        \centering
        \begin{adjustbox}{max width=\textwidth}
        \begin{tikzpicture}
        \begin{axis}[
            xlabel={Freq(/h)},
            ylabel={\% Modal Shift},
            xmin=1, xmax=10,
            ymin=0, ymax=75,
            xtick={1,2, 4, 6, 8, 10},
            ytick={0,20,40,60},
            legend pos=north west,
            ymajorgrids=true,
            grid style=dashed,
        ]
        \addplot[
            color=blue,
            mark=*,
            sharp plot,
            ]
            coordinates {
(1,3.32317495828447)
(2,5.91142471801031)
(4,7.28592064432513)
(6,7.9550490980951)
(8,8.31154111476017)
(10,8.07573437455817)
            };
        \addplot[
            color=orange,
            mark=*,
            sharp plot,
            ]
            coordinates {
(1,24.863901817233)
(2,47.9750007931577)
(4,55.8364461628367)
(6,57.5150762945711)
(8,59.2239044745461)
(10,59.9132890256991)
            };
        \legend{Base, Opt}
        \end{axis}
        \end{tikzpicture}
        \end{adjustbox}
        \caption{Frequency - \% Modal Shift}
    \end{subfigure}
        \hfill
    \begin{subfigure}[b]{0.49\textwidth}
        \centering
        \begin{adjustbox}{max width=\textwidth}
        \begin{tikzpicture}
        \begin{axis}[
            xlabel={Freq(/h)},
            ylabel={Total Driving Distance (x1000)},
            xmin=1, xmax=10,
            ymin=3.6, ymax=4.4,
            xtick={1, 2, 4, 6, 8, 10},
            ytick={3.7,3.8 ,3.9,4.0, 4.1,4.2, 4.3},
            legend pos= north west,
            ymajorgrids=true,
            grid style=dashed,
        ]
        \addplot[
            sharp plot,
            color=blue,
            mark=*,
        ]
        coordinates {
(1,4.16889837898783)
(2,4.12817352283799)
(4,4.10909024909956)
(6,4.0807928955174)
(8,4.06176064181311)
(10,4.07538327021504)
        };
        \addplot[
            sharp plot,
            color=orange,
            mark=*,
        ]
        coordinates {
(1,3.97091356109936)
(2,3.83279521760925)
(4,3.76352174949516)
(6,3.74439539734152)
(8,3.70886089217209)
(10,3.72005244530994)
        };
        \legend{Base, Opt}
        \end{axis}
        \end{tikzpicture}
        \end{adjustbox}
        \caption{Frequency - Total Driving Distance}
    \end{subfigure}
    \vspace{1cm}
    
    \begin{subfigure}[b]{0.49\textwidth}
        \centering
        \begin{adjustbox}{max width=\textwidth}
        \begin{tikzpicture}
        \begin{axis}[
            xlabel={Freq(/h)},
            ylabel={Tax},
            xmin=1, xmax=10,
            ymin=0.2, ymax=0.72,
            xtick={1, 2, 4, 6, 8, 10},
            ytick={0.2,0.4,0.6},
            legend pos=north west,
            ymajorgrids=true,
            grid style=dashed,
        ]
        \addplot[
            color=blue,
            mark=*,
            sharp plot,
            ]
            coordinates {
(1,0.27019273516501)
(2,0.544932555500356)
(4,0.644367011833324)
(6,0.665715002323926)
(8,0.695114567366673)
(10,0.701814660900382)
            };
        \end{axis}
        \end{tikzpicture}
        \end{adjustbox}
        \caption{Frequency - Tax}
    \end{subfigure}    
    \hfill
    \begin{subfigure}[b]{0.49\textwidth}
        \centering
        \begin{adjustbox}{max width=\textwidth}
        \begin{tikzpicture}
        \begin{axis}[
            xlabel={Freq(/h)},
            ylabel={Total Cost of the System (x1000)},
            xmin=1, xmax=10,
            ymin=1.05, ymax=1.7,
            xtick={1, 2, 4, 6, 8, 10},
            ytick={1.1, 1.2, 1.3, 1.4, 1.5, 1.6},
            legend pos=north west,
            ymajorgrids=true,
            grid style=dashed,
        ]
        \addplot[
            sharp plot,
            color=blue,
            mark=*,
        ]
        coordinates {
(1,1.07729221119594)
(2,1.09476600547525)
(4,1.10254768444355)
(6,1.10271719654919)
(8,1.10038995643552)
(10,1.10199239023388)
        };
        \addplot[
            sharp plot,
            color=orange,
            mark=*,
        ]
        coordinates {
(1,1.26084975275755)
(2,1.48025923631815)
(4,1.54691363793937)
(6,1.55944083498222)
(8,1.57166183620153)
(10,1.58269210248616)
        };
        \legend{Base, Opt}
        \end{axis}
        \end{tikzpicture}
        \end{adjustbox}
        \caption{Frequency - Total Cost of the System}
    \end{subfigure}
    \caption{Impact of train frequency per hour on the performances of base and optimal policies}
    \label{fig:SensitivityFreq}
\end{figure}

As shown in Figure \ref{fig:SensitivityFreq}(a), the optimal policy increases the modal shift from the base scenarios, starting from less than 10\% and reaching up to a maximum of 60\% with increasing scheduled line frequency. The modal shift appears to saturate after the frequency reaches 6, as higher frequencies do not further reduce the request waiting time at the station. Additionally, as the modal shift increases, the driving time under the optimal policy decreases with rising frequency. The relative driving time savings between the base and optimal policies also grow with increasing frequency. However, a higher tax is required with the increased modal shift, resulting in higher total system costs and increasing the freight forwarder's cost burden. Therefore, we may conclude that improving the services without any strategic changes can increase the modal shift to a certain extent.

\subsection{A trade-off between minimizing distances and minimizing total cost of transportation under different policies}

In this section, we explore various policies under different budget levels. We analyze how total distances and freight forwarder costs change with varying subsidy and tax levels to understand the impact on stakeholders. The subsidy levels range from 0 to 1 in increments of 0.1, and we consider three budget levels for the authority, represented as a percentage (0\%, 25\%, and 50\%) of $f^\text{full}$, the required budget to subsidize the schedule line without tax fully.  We apply the algorithm from Section \ref{sec:Algorithm} to determine the solutions for each scenario.

To illustrate the trade-off effect, we select the instance with the highest modal shift, referred to as Inter-Diff-W. Figure \ref{fig:cost_distance} depicts the trade-off between stakeholders’ objectives. For each budget, each point depicts a different subsidy level with the corresponding tax level. Solutions are plotted as points on the graph and connected to approximate a Pareto front. To ensure solution quality, the best solution from a lower subsidy level is used as the initial solution for a higher subsidy level. Infeasible points that do not meet budget constraints have been omitted. Additionally, we include the point for the base scenario.

\begin{figure}[h!]
    \centering
    \begin{tikzpicture}
    \begin{axis}[
        xlabel={Cost of freight forwarder},
        ylabel={Distances},
        xmin=1500, xmax=2600,
        ymin=5300, ymax=6350,
        legend pos=north east,
        width=10cm,
        height=8cm
    ]
    \addplot[
        mark=square*,
        solid,
        thick
    ] coordinates {
        (1766.48, 6115.164)
        (1861.01, 6036.44052)
        (1905.99270, 5825.30971)
        (2059.22, 5694.64933)
        (2124.396567, 5611.576526)
        (2196.981839, 5503.542764)
        (2328.263773, 5485.415041)
        (2460.517804, 5446.57443)
        (2503.34522, 5382.245296)
        (2524.54982312471, 5367.31366206947)
    };
    \addlegendentry{0\%}

    \addplot[
        mark=*,
        dotted,
        thick
    ] coordinates {
        (1727.767801, 5660.095112)
        (1896.77538, 5548.985729)
        (2007.537873, 5529.951718)
        (2090.386624, 5449.445234)
        (2168.22691, 5412.027818)
        (2219.54982312471, 5367.31366206947)
    };
    \addlegendentry{25\%}

    \addplot[
        mark=triangle*,
        dashed,
        thick
    ] coordinates {
        (1656.378427, 5585.231457)
        (1751.545733, 5440.666683)
        (1848.666092, 5374.79033)
        (1914.54982312471, 5367.31366206947)
    };
    \addlegendentry{50\%}
    \addplot [black, mark = x, nodes near coords=Base,every node near coord/.style={anchor=+90}] coordinates {(1772.2, 6290.0)};
    \end{axis}
    \end{tikzpicture}
    \caption{Trade-off between transportation authority and freight forwarder for policies with partial subsidy and optimal policy with different budgets. }
    \label{fig:cost_distance}
\end{figure}
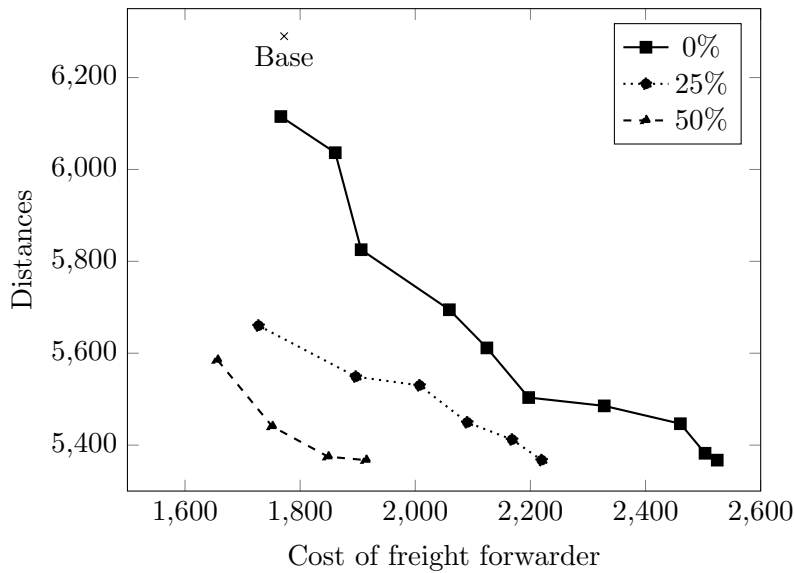

Figure \ref{fig:cost_distance} reveals two key observations. First, for each budget level, an increase in subsidy results in decreased distances but increased costs for freight forwarders. From lowest to optimal subsidy, the distance decreases from 3.9 to 5.1 \% while the cost of freight forwarder changes between 15.6 and 28.5\%.  The increased cost is due to the higher taxes required for greater subsidies. While the authority benefits most from higher subsidy levels, it must also consider the burden on freight forwarders. This finding supports Proposition \ref{prop:Proposition_2}, which states that increasing taxes while maintaining the budget level ensures no worse total driven distance.

Second, when the budget level increases while the subsidy level remains unchanged, distances tend to increase, and freight forwarder costs decrease. The distance increase is pretty small, with an increase of less than 1 \% on average, while the cost savings range from 13.3 to 28.9 \%. This is due to the reduced tax burden associated with a higher budget level. 

\section{Case Study}

In this section, we test the optimal policy on a case study in Berlin. We obtain order locations and corresponding distances from \cite{Sartori2020}. The locations were extracted from open addresses, and the corresponding distances were derived from the routing solution. The remaining parameters are generated as follows:

\begin{itemize}
    \item Data Sampling: We randomly sampled 100 requests from 5000 Berlin locations in a random instance, four depot locations (using delivery company depot locations and one imaginary location in the central area), and 40 vehicles.
    \item Scheduled Line Service Network: We selected a part of the S-Bahn Berlin light rail with 5 transfer nodes and 14 direct connections. The average headway is 10 minutes per train for all lines.
    \item The demand for each order is uniformly generated between 5 and 10. The time window is 1 hour. The vehicle capacity is 25 units, while the available train capacity is 60 units. We assume the vehicle speed is 60 km/h. The vehicle cost is 1.24 \texteuro/km, and the cost of transporting on the scheduled line service per demand is 2,4 \texteuro. 
\end{itemize}

Based on this setting, a scenario is shown in Figure \ref{fig:instance}. Black circles show the pickup and delivery locations, while the pink and red ones illustrate the depots and train stations.

\begin{figure}[htbp]
    \centering
    \includegraphics[width=0.75\textwidth]{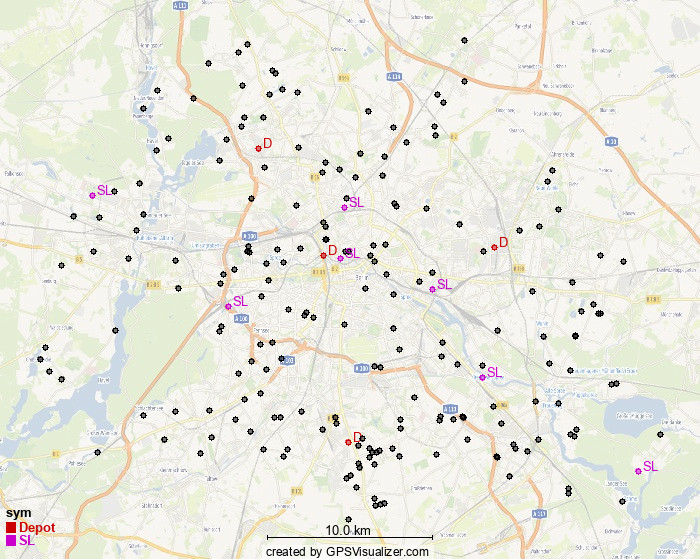}
    \caption{a Berlin case study}
    \label{fig:instance}
\end{figure}

\begin{table}[h]
\centering
\caption{Results of the Berlin Case Study where Base versus optimal (Opt) policy}
\label{tab:case_ber}
\scalebox{0.8}{
\begin{tabular}{@{}crrrrrrrrrrrrrc@{}}
\cmidrule(r){1-7} \cmidrule(l){9-15}
 & \multicolumn{3}{c}{Driving distance} & \multicolumn{1}{c}{} & \multicolumn{2}{c}{\%modal shift} & \multicolumn{1}{c}{} & \multicolumn{3}{c}{Operation cost} & \multicolumn{1}{c}{} & \multicolumn{2}{c}{\#Vehicles} &  \\ \cmidrule(lr){2-4} \cmidrule(lr){6-7} \cmidrule(lr){9-11} \cmidrule(lr){13-14}
Cost train & \multicolumn{1}{l}{Base} & \multicolumn{1}{l}{Opt} & \multicolumn{1}{l}{\%} & \multicolumn{1}{l}{} & \multicolumn{1}{l}{Base} & \multicolumn{1}{l}{Opt} & \multicolumn{1}{l}{} & \multicolumn{1}{l}{Base} & \multicolumn{1}{l}{Opt} & \multicolumn{1}{l}{\%} & \multicolumn{1}{l}{} & \multicolumn{1}{l}{Base} & \multicolumn{1}{l}{Opt} & tax \\ \cmidrule(r){1-4} \cmidrule(lr){6-7} \cmidrule(lr){9-11} \cmidrule(l){13-15} 
2 & 3376.5 & 3341 & -1.1 &  & 19.9 & 29.4 &  & 4446 & 4529 & 1.9 &  & 25.5 & 26.4 & \multicolumn{1}{r}{9.3} \\
4 & 3449.9 & 3349 & -2.9 &  & 4.0 & 23.2 &  & 4385 & 4766 & 8.7 &  & 24.4 & 25.7 & \multicolumn{1}{r}{14.8} \\ \bottomrule
\end{tabular}}
\end{table}

Table \ref{tab:case_ber} presents the average results of 10 scenarios comparing the base and optimal policies for the case study, with varying costs of transporting a demand unit on the scheduled line. The authority achieves greater driving distance savings when the cost of transporting on the scheduled line is higher. This is due to differences in the base scenarios. When the cost of using the scheduled line is low, the freight forwarder already achieves a relatively low driving distance, although not as low as the driven distance in the base scenario. The optimal policy provides a 1.1\% distance saving with a higher modal shift. Conversely, when the scheduled line cost is high, the freight forwarder uses it less often, resulting in a 4\% modal shift. Under the optimal policy, the modal shift increases to 23.2\%, and the driving distance saving is approximately 2.9\%. However, this saving comes with an increase in operational costs. This may prompt the authority to allocate an additional budget to alleviate the cost burden on the freight forwarder.

\section{Conclusions}
\label{sec:Conclusion}

This study investigates the impact of an integrated road tax and scheduled line service subsidy policy on promoting a freight modal shift from road to more sustainable modes for inner-city freight transportation. The subsidy is partly funded by road tax revenues to overcome political and economic challenges. We formulate a bilevel problem to capture the interaction between the transportation authority and freight forwarder. The upper level determines the policy, tax, and subsidy levels from an available budget, while the lower level involves freight forwarders’ routing decisions based on the given policy. We identify a condition for the optimal policy and derive its properties. Moreover, we show the benefits and consequences of optimal policies for each player. Since the PDPTW-SL scheme at the lower level depends on the tactical network decision, we also test the impact of train frequency and order scatteredness on stakeholders’ objectives. However, the optimal policy may burden the freight forwarder depending on the budget. We also show the trade-off between minimizing distance and total transportation costs under non-optimal policies. Finally, we validate our approach with a Berlin case study using open data from \cite{Sartori2020}.

We show that the optimal policy for the proposed problem is when the scheduled line services are fully subsidized. Moreover, under this policy, the budget does not influence the freight forwarder’s decision but can directly reduce their costs. In addition to the theoretical findings, we conducted extensive numerical tests. We show that the optimal policy can reduce the driving distance by up to 12.5\% and substantially increase the modal shift at higher operational costs from the tax level. Moreover, the scheme shows higher driving distance savings and modal shifts with higher train frequency up to a certain level. Furthermore, we numerically show that with an additional budget, the freight forwarder can save more costs than the budget with the partial subsidy policy at the cost of higher distance for the authority. For the case study, we can achieve driving distance savings and high modal shift with suitable cost settings.

Future research could focus on developing an exact algorithm to solve this bilevel program. Since the current work omits the initial cost of using more vehicles, we cannot address the trade-offs when the modal shift is higher. Incorporating vehicle costs can tackle this. Investigating scenarios with multiple freight forwarders and evaluating the impact of policies on different freight forwarders would also provide more practical insights. We can see which freight forwarder would feel the impact of the policy and to what extent.

\section*{CRediT author statement}

\textbf{Tundulyasaree K.}: Conceptualization, Methodology, Software, Validation, Formal analysis, Writing - Original Draft, Visualization. \textbf{Martin L.}: Conceptualization, Methodology, Formal analysis, Writing - Review \& Editing, Supervision. \textbf{Van Lieshout R. N.}: Conceptualization, Methodology, Formal analysis, Writing - Review \& Editing, Supervision. \textbf{Van Woensel T.}: Conceptualization, Writing - Review \& Editing, Supervision.

\section*{Acknowledgments}%
The first author expresses profound gratitude to the Royal Thai Government for providing the PhD scholarship at Eindhoven University of Technology.
The second and fourth authors thank the European Union for the support via the Interreg North-Sea Region project InnoWaTr. This work used the Dutch national e-infrastructure with the support of the SURF Cooperative using NWO/EINF-grant [EINF-9143].


\bibliographystyle{elsarticle-harv} 
\bibliography{cas-refs}

\appendix
\section{Proofs}
\label{app:proofs}

\begin{lemma}\label{lem:tradeoff}
The freight forwarder's driven distance decreases if and only if the amount on the scheduled line increases, i.e. for two feasible solutions $\langle s_1,t_1,d_1^\star,f_1^\star\rangle$, $\langle s_2, t_2,d_2^\star,f_2^\star\rangle$, it holds that $d_1^*<d_2^*\iff f_1^*>f_2^*$. 
\end{lemma}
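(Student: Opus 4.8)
The plan is to argue directly from the two lower-level optimality conditions, one for each of the given feasible solutions, each evaluated against the \emph{other} solution's routing plan. The key observation is that the lower-level feasible set $\mathcal{X}$ does not depend on the policy, so $(d_2^*,f_2^*,x_2^*)$ is feasible for the lower-level problem faced under policy $(s_1,t_1)$, and symmetrically $(d_1^*,f_1^*,x_1^*)$ is feasible under $(s_2,t_2)$. Writing $\Delta d = d_1^* - d_2^*$ and $\Delta f = f_1^* - f_2^*$, optimality of solution $1$ yields $(1+t_1)\Delta d + (1-s_1)\Delta f \le 0$ and optimality of solution $2$ yields $(1+t_2)\Delta d + (1-s_2)\Delta f \ge 0$, where throughout $1+t_i \ge 1 > 0$ and $0 \le 1-s_i \le 1$ by the domain constraint \eqref{eq:constr_domain}. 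Note the budget constraint \eqref{eq:constr_budget} is not needed; only these domain bounds and lower-level optimality enter.

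For $d_1^* < d_2^* \Rightarrow f_1^* > f_2^*$, substitute $\Delta d < 0$ into solution $2$'s inequality to get $(1-s_2)\Delta f \ge -(1+t_2)\Delta d > 0$, and since $1-s_2 \ge 0$ this forces $\Delta f > 0$ (incidentally also $s_2 < 1$). For the converse $f_1^* > f_2^* \Rightarrow d_1^* < d_2^*$, substitute $\Delta f > 0$ into solution $1$'s inequality to get $(1+t_1)\Delta d \le -(1-s_1)\Delta f \le 0$, hence $\Delta d \le 0$. It then remains to rule out the boundary case $\Delta d = 0$: if $\Delta d = 0$ while $\Delta f > 0$, solution $1$'s inequality forces $1-s_1 = 0$, so under $s_1 = 1$ the lower-level objective reduces to $(1+t_1)d$ and $d_1^*$ is the minimum attainable distance over $\mathcal{X}$; since $d_2^* = d_1^*$, solution $2$ attains that same minimum, and by the deterministic lower-level selection the two chosen solutions coincide, contradicting $\Delta f > 0$. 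By symmetry of the statement under swapping solutions $1$ and $2$, these two implications suffice.

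I expect this last boundary case to be the main obstacle. The inequality manipulations give only the non-strict conclusion $d_1^* \le d_2^*$ for free, and upgrading it to the strict inequality genuinely leans on the deterministic-selection assumption (equivalently, uniqueness of the minimum-distance lower-level solution, or a consistent tie-breaking rule); without such an assumption one could in principle have $d_1^* = d_2^*$ with $f_1^* \neq f_2^*$ when the fully subsidized side places zero weight on $f$. Everything else is routine once the two cross-optimality inequalities are in place.
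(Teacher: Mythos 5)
Your proof is essentially the paper's: the paper disposes of this lemma in one line by noting that if one solution had both smaller driven distance and smaller (or equal) scheduled-line cost, the other would be strictly dominated in the lower-level objective and hence could not be optimal, which is precisely the content of your two cross-optimality inequalities $(1+t_1)\Delta d + (1-s_1)\Delta f \le 0$ and $(1+t_2)\Delta d + (1-s_2)\Delta f \ge 0$ combined with $1+t_i>0$ and $0\le 1-s_i$. The one place you go beyond the paper is the boundary case $s_1=1$, $\Delta d=0$, $\Delta f>0$, which the paper's dominance argument silently skips; your instinct to invoke deterministic selection is right, but note that the paper's assumption only fixes the follower's choice for a single policy $(s,t)$, so concluding that the two selections coincide across two \emph{different} policies genuinely requires a consistent tie-breaking rule or uniqueness of the minimum-distance routing --- exactly the caveat you flag yourself, and one the paper does not address at all.
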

\begin{proof}
    This immediately follows from the optimality of the freight forwarders decisions. If both $d_1^*<d_2^*$ and $f_1^*<f_2^*$, the second solution is dominated by the first, and can therefore never be a solution to the lower level problem. 
\end{proof}

\setcounter{proposition}{0}

\begin{proposition}\label{prop:Proposition_1}

For a given budget $B$, the freight forwarder's total cost decreases if and only if the driven distance is increasing, i.e., for two feasible solutions $\langle s_1,t_1,d_1^\star,f_1^\star\rangle$, $\langle s_2, t_2,d_2^\star,f_2^\star\rangle$, 
\begin{align}\label{eq:costFreightForwarder}
d_1^\star<d_2^* \iff \left(1 + t_1\right) d_1^\star + \left(1 - s_1\right) f_1^\star &> \left(1 + t_2\right) d_2^\star + \left(1 - s_2\right) f_2^\star 
\end{align}
\end{proposition}\
\begin{proof}

Let $\langle s_1,t_1,d_1^\star,f_1^\star\rangle$ and $\langle s_2,t_2,d_2^\star,f_2^\star\rangle$ be two feasible solutions for the problem with $0\leq s_1, s_2 \leq 1$ the subsidy in either solution, $0 \leq t_1, t_2$ the associated tax level, and $d_1^\star$ and $d_2^\star$ the driven distance in either solution and $f_1^\star$ and $f_2^\star$ the cost of using the scheduled line services. 
We start by rewriting the second equation in the proposition: 
\begin{align*}
    \left(1 + t_1\right) d_1^\star + \left(1 - s_1\right) f_1^\star &> \left(1 + t_2\right) d_2^\star + \left(1 - s_2\right) f_2^\star \\
    \iff d_1^\star +f_1^\star - t_1d_1^*+s_1f_1^* &> d_2^\star +  f_2^\star-t_2d_2^*+s_2f_2^*. 
\end{align*}
Using that $B=s_1f_1^\star - t_1d_1^\star= s_2 f_2^\star - t_2 d_2^\star$, we can further rewrite as 
\begin{align*}
    d_1^\star +f_1^\star-B &>d_2^\star +  f_2^\star-B \\
    \iff d_1^\star +f_1^\star &> d_2^\star +f_2^\star.
\end{align*}
Therefore, we continue by proving the following equivalence: 
$$d_1^\star<d_2^* \iff d_1^\star +f_1^\star > d_2^\star +f_2^\star.$$

\noindent ($\implies$) Since the freight forwarder makes optimal decisions under both policies, it holds that
\begin{align*}
    (1+t_2)d_2^*+(1-s_2)f_2^*& \leq (1+t_2)d_1^*+(1-s_2)f_1^* \\
    \iff d_2^*+f_2^*-d_1^*-f_1^*&\leq t_2(d_1^*-d_2*)+s_2(f_2^*-f_1^*).
\end{align*}
Given that $d_1^\star < d_2^\star$, by Lemma~\ref{lem:tradeoff} it holds that $f_1^\star > f_2^\star$, so the right hand side of the above equation is negative. Thus
\begin{align*}
    d_2^*+f_2^*-d_1^*-f_1^*&< 0 \\
    \iff d_1^*+f_1^* &> d_2^*+f_2^*.
\end{align*}

\noindent ($\Longleftarrow$) We now start with the optimality of the first lower level solution:
\begin{align*}
    (1+t_1)d_1^*+(1-s_1)f_1^*& \leq (1+t_1)d_2^*+(1-s_1)f_2^* \\
    \iff d_1^*+f_1^*-d_2^*-f_2^*&\leq t_1(d_2^*-d_1^*)+s_1(f_1^*-f_2^*).
\end{align*}
Given that $d_1^\star +f_1^\star > d_2^\star +f_2^\star$, the left hand side of the above equation must be positive. Therefore, at least one of the terms on the right hand side should be positive. By Lemma~\ref{lem:tradeoff}, both terms have the same parity, and must therefore both be positive. We conclude that $d_2^*>d_1^*$.
\end{proof}


\begin{proposition}
\label{prop:Proposition_2}
For a given budget $B$, the optimal travel distance $d^*$ decreases in the tax level $t$, i.e., for two feasible solutions $\langle s_1,t_1,d_1^\star,f_1^\star\rangle$, $\langle s_2,t_2,d_2^\star,f_2^\star\rangle$ with $t_1 < t_2$, it holds that 
$d_1^\star \geq d_2^\star$. 
\end{proposition}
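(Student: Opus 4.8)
The plan is to argue by contradiction, assuming two feasible solutions $\langle s_1,t_1,d_1^\star,f_1^\star\rangle$ and $\langle s_2,t_2,d_2^\star,f_2^\star\rangle$ with $t_1<t_2$ but $d_1^\star<d_2^\star$. By Lemma~\ref{lem:tradeoff}, $d_1^\star<d_2^\star$ forces $f_1^\star>f_2^\star$. The two inequalities I want to extract are the optimality conditions of the freight forwarder under each policy: under policy $1$ the solution $(d_1^\star,f_1^\star)$ is no worse than $(d_2^\star,f_2^\star)$, and under policy $2$ the solution $(d_2^\star,f_2^\star)$ is no worse than $(d_1^\star,f_1^\star)$.

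First I would write out those two optimality inequalities, $(1+t_1)d_1^\star+(1-s_1)f_1^\star\le(1+t_1)d_2^\star+(1-s_1)f_2^\star$ and $(1+t_2)d_2^\star+(1-s_2)f_2^\star\le(1+t_2)d_1^\star+(1-s_2)f_1^\star$, and rearrange each so that the distance and flow differences are grouped: from the first, $(1+t_1)(d_2^\star-d_1^\star)\ge(1-s_1)(f_1^\star-f_2^\star)$, and from the second, $(1+t_2)(d_2^\star-d_1^\star)\le(1-s_2)(f_1^\star-f_2^\star)$. Writing $\Delta d=d_2^\star-d_1^\star>0$ and $\Delta f=f_1^\star-f_2^\star>0$, these read $(1+t_1)\Delta d\ge(1-s_1)\Delta f$ and $(1+t_2)\Delta d\le(1-s_2)\Delta f$. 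Next I would bring in the budget constraint $s_1 f_1^\star-t_1 d_1^\star=s_2 f_2^\star-t_2 d_2^\star=B$ to relate $s_1,t_1$ to $s_2,t_2$; the goal is to show these three relations together are inconsistent when $t_1<t_2$.

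The cleanest route is probably to combine the two optimality inequalities directly: since $\Delta d>0$ and $\Delta f>0$, we get $(1+t_2)\Delta d\le(1-s_2)\Delta f$ and $(1-s_1)\Delta f\le(1+t_1)\Delta d$, hence $(1+t_2)\Delta d\le(1-s_2)\Delta f$ while also needing the chain to close; dividing, $\frac{1-s_2}{1+t_2}\ge\frac{\Delta d}{\Delta f}\ge\frac{1-s_1}{1+t_1}$, so $\frac{1-s_2}{1+t_2}\ge\frac{1-s_1}{1+t_1}$. Then I would use Proposition~\ref{prop:Proposition_1} (or its intermediate form $d_1^\star+f_1^\star>d_2^\star+f_2^\star\iff d_1^\star<d_2^\star$) together with the budget equality to pin down the ordering of costs, and feed that back against $t_1<t_2$. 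I expect the main obstacle to be the bookkeeping that ties the ratio inequality $\frac{1-s_2}{1+t_2}\ge\frac{1-s_1}{1+t_1}$ to the budget constraint in a way that produces a genuine contradiction with $t_1<t_2$ — in particular, carefully using that $s_1,s_2\in[0,1]$ and that the common budget value $B$ equates $s_1 f_1^\star-t_1 d_1^\star$ with $s_2 f_2^\star-t_2 d_2^\star$, so that I can solve for, say, $s_2$ in terms of $B,f_2^\star,t_2,d_2^\star$ and substitute. A likely alternative is to avoid the ratio manipulation entirely and instead add the two rearranged optimality inequalities after scaling, exploiting $t_1<t_2$ to get $0<(t_2-t_1)\Delta d\le -(s_2-s_1)\Delta f$ or similar, then show that the resulting sign constraint on $s_2-s_1$ contradicts the budget equation; I would try both and keep whichever needs fewer case splits.
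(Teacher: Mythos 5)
Your plan is correct, and the closing step you leave open does go through; let me confirm that and then contrast your route with the paper's. Subtracting your two rearranged optimality inequalities gives $(t_2-t_1)\Delta d \le (s_1-s_2)\Delta f$; the left-hand side is strictly positive and $\Delta f>0$, so $s_1>s_2$ is forced (equivalently: if $s_1\le s_2$, the two inequalities chain into $(1+t_2)\Delta d\le(1-s_2)\Delta f\le(1-s_1)\Delta f\le(1+t_1)\Delta d<(1+t_2)\Delta d$, already a contradiction). With $s_1>s_2$ the budget identity finishes it: $s_1f_1^\star-s_2f_2^\star=t_1d_1^\star-t_2d_2^\star$, and under your contradiction hypothesis the right-hand side is negative ($t_1d_1^\star\le t_1d_2^\star<t_2d_2^\star$, using $d_2^\star>d_1^\star\ge0$ and $t_2>t_1\ge0$) while the left-hand side is positive ($s_2f_2^\star\le s_2f_1^\star<s_1f_1^\star$, using $f_1^\star>f_2^\star\ge0$ and $s_1>s_2\ge0$). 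So no appeal to Proposition~\ref{prop:Proposition_1} is needed and the ratio manipulation is an unnecessary detour. The paper organizes the same ingredients differently: it splits on the sign of $s_1-s_2$ up front; its case $s_1<s_2$ is exactly your chained contradiction from the two optimality inequalities alone (no budget needed), and its case $s_1\ge s_2$ compares the two freight-forwarder costs directly and invokes Proposition~\ref{prop:Proposition_1}, which already encapsulates the budget bookkeeping. Your unified argument buys a single contradiction with no case split at the price of redoing the budget computation explicitly; the paper's buys reuse of Proposition~\ref{prop:Proposition_1} at the price of two cases. When writing it up, do record the nonnegativity facts ($d_2^\star>0$, $f_1^\star>0$) that make the final inequalities strict.
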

\begin{proof}
    We distinguish two cases for the subsidy level. 

    Case I: $s_1\geq s_2$. It follows from the lower level optimality of the first solution that 
    \begin{align*}
        (1+t_1)d_1^*+(1-s_1)f_1^*&\leq (1+t_1)d_2^*+(1-s_1)f_2^* \\
         &\stackrel{t_1<t_2}{<} (1+t_2)d_2^*+(1-s_1)f_2^* \\
          &\stackrel{s_1\geq s_2}{<} (1+t_2)d_2^*+(1-s_2)f_2^*.
    \end{align*}
Applying Proposition~\ref{prop:Proposition_1}, we find that $d_1^*>d_2^*$.

Case II: $s_1<s_2$. We use proof by contradiction and assume that $d_1^*<d_2^*$. We again start from the lower level optimality of the first solution: 
\begin{align}
\left(1+t_1\right) d_1^\star + \left(1 - s_1\right) f_1^\star &\leq \left(1+t_1\right)d_2^\star + \left(1 - s_1\right) f_2^\star \nonumber\\
\iff \left(1+t_1\right) \left(d_1^\star - d_1^\star\right) &\geq \left(1-s_1\right)\left(f_1^\star - f_2^\star\right) \nonumber \\
\left(1+t_1\right) \left(d_2^\star - d_1^\star\right) &\stackrel{s_1< s_2}{>} \left(1-s_2\right)\left(f_1^\star - f_2^\star\right). \label{eq:Lemma1_eq1}\\
\intertext{Analogously, the second solution is optimal given subsidy rate $s_2$ and tax level $t_2$: }
\left(1+t_2\right) d_2^\star + \left(1 - s_2\right) f_2^\star &\leq \left(1+t_2\right)d_1^\star + \left(1 - s_2\right) f_1^\star \nonumber \\ 
\iff \left(1+t_2\right) \left(d_2^\star - d_1^\star\right) &\leq \left(1-s_2\right)\left(f_1^\star - f_2^\star\right) \label{eq:Lemma1_eq2}
\intertext{which with \eqref{eq:Lemma1_eq1} becomes}
\left(1+t_1\right) \left(d_2^\star - d_1^\star\right) &> \left(1+t_2\right)\left(d_2^\star - d_1^\star\right), \nonumber \label{eq:Lemma1_eq13} \\
\stackrel{t_1<t_2}{\iff}  d_2^*-d_1^*<0
\end{align}
leading to a contradiction with the assumption  $d_1^\star < d_2^\star$. 

\end{proof}

\begin{proposition}
\label{prop:newProp3}
For a given budget $B$ and subsidy level $s$, let $d^*(s)$ denote the lowest driven distance over all tax rates $t$. Then, $d^*(s)$ decreases monotonically in $s$.  
\end{proposition}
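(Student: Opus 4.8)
The plan is to fix two subsidy levels $s_1<s_2$ (both admitting a feasible tax rate), pick a tax $t_1$ that attains the minimum $d^*(s_1)$ — possible since only finitely many distance values occur at the lower level — with associated feasible solution $\langle s_1,t_1,d_1^\star,f_1^\star\rangle$, so that $s_1 f_1^\star - t_1 d_1^\star = B$, and then construct a feasible policy at subsidy $s_2$ whose driven distance is at most $d_1^\star$. Since $d^*(s_2)$ is by definition the smallest such distance, this yields $d^*(s_2)\le d_1^\star=d^*(s_1)$ and hence monotonicity. The construction proceeds in two moves: first raise the subsidy with the tax held fixed, then raise the tax with the subsidy held fixed.

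The first step is to go from $(s_1,t_1)$ to $(s_2,t_1)$. Writing $(d',f')$ for the lower-level optimum at $(s_2,t_1)$, I would add the lower-level optimality inequalities at $(s_1,t_1)$ and at $(s_2,t_1)$; the $d$-terms cancel and one is left with $(s_1-s_2)(f'-f_1^\star)\le 0$, hence $f'\ge f_1^\star$, and therefore $d'\le d_1^\star$ by Lemma~\ref{lem:tradeoff}. (This is the fixed-tax counterpart of the algebra behind Proposition~\ref{prop:Proposition_2}.) Because $f'\ge f_1^\star$ and $d'\le d_1^\star$ while $s_2,t_1\ge 0$, the budget at $(s_2,t_1)$ satisfies $s_2 f' - t_1 d' \ge s_2 f_1^\star - t_1 d_1^\star = B + (s_2-s_1)f_1^\star \ge B$. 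So raising the subsidy at fixed tax cannot increase the distance but pushes the budget to (weakly) above $B$; if it equals $B$ exactly we are already done.

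The second step is to restore the budget to $B$ by raising the tax from $t_1$, keeping $s=s_2$ fixed. A parametric argument identical to the one used for Proposition~\ref{prop:Proposition_2}, specialised to a fixed subsidy, shows that $d^*(s_2,\cdot)$ is non-increasing in $t$, so every tax $t\ge t_1$ keeps the driven distance at most $d'\le d_1^\star$. It then remains to produce $t_2\ge t_1$ with $s_2 f^*(s_2,t_2) - t_2\, d^*(s_2,t_2) = B$. Since $f^*$ is bounded (at most all freight goes on the line) and every feasible routing uses a strictly positive distance, the budget tends to $-\infty$ as $t\to\infty$; as it is at least $B$ at $t_1$, an intermediate tax brings it back to $B$, and at that tax the distance is still at most $d_1^\star$.

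The delicate point — and the main obstacle — is making this last intermediate-value step rigorous: the map $t\mapsto s_2 f^*(s_2,t)-t\,d^*(s_2,t)$ need not be continuous, since at the finitely many taxes where the lower-level optimum switches the distance drops while the flow rises (Lemma~\ref{lem:tradeoff}), so the budget jumps \emph{upward}. I would circumvent this by setting $\hat t := \inf\{t\ge t_1 : s_2 f^*(s_2,t)-t\,d^*(s_2,t)<B\}$; the definition of the infimum forces the left limit of the budget at $\hat t$ to be at least $B$ and its right limit to be at most $B$, while the upward-jump property forces the right limit to be no smaller than the left limit, so both one-sided limits equal $B$. Hence the budget genuinely equals $B$ at $\hat t$, irrespective of how the freight forwarder breaks ties at a switching point, and $d^*(s_2,\hat t)\le d'\le d_1^\star$, which finishes the argument.
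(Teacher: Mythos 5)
Your proof is correct, but it takes a genuinely different route from the paper's. The paper argues in one shot: starting from the distance-minimizing solution $\langle s,t^*,d^\star(s),f^\star(s)\rangle$, it explicitly defines the new tax $t'=(s'f^\star(s)-B)/d^\star(s)$, which by construction keeps the \emph{same routing} budget-feasible at subsidy $s'$, and then verifies through a single chain of inequalities that under $(s',t')$ the freight forwarder still weakly prefers that routing to every alternative with larger distance. You instead decompose the move into two monotone comparative-statics steps --- raise $s$ at fixed $t$ (distance weakly falls, budget weakly rises above $B$), then raise $t$ at fixed $s$ (distance weakly falls again) --- and recover budget feasibility via an intermediate-value argument on the piecewise-linear budget map $t\mapsto s_2 f^*(s_2,t)-t\,d^*(s_2,t)$, using the fact that its only discontinuities are upward jumps so that the infimum $\hat t$ pins both one-sided limits to $B$ independently of tie-breaking. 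Both exchange arguments in your steps are sound (the $f'\ge f_1^\star$ deduction and the non-increase of $d^*$ in $t$ at fixed $s$), and your handling of the discontinuity is careful and correct; the only implicit assumptions are that every feasible routing has strictly positive distance and that the lower level has finitely many routings, both of which the paper also needs (it divides by $d^\star(s)$ in its own construction). What the paper's approach buys is brevity --- no existence argument for the restoring tax is needed because $t'$ is written down in closed form --- at the cost of a less transparent verification chain. What your approach buys is modularity: each step is an interpretable monotonicity statement, it shows as a by-product that feasibility at $s_1$ implies feasibility at every $s_2>s_1$, and the infimum argument is essentially the justification that the bisection search of Algorithm~\ref{alg:bisection} can always hit the budget exactly despite the discontinuous lower-level response.
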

\begin{proof}
   Let $\langle s,t^*,d^\star(s),f^\star(s)\rangle$ denote the solution with the lowest driven distance given subsidy level $s$, and let $\langle s,t_2,d_2^\star,f_2^\star\rangle$ denote any other feasible solution with the same subsidy level, so $d_2^*>d^\star(s)$. Now consider a higher subsidy level $s'>s$ and new tax level $t'=(s'f^\star(s)-B)/d^\star(s)>t^*$. By the optimality of the first solution, we have that
   \begin{align*}
       (1+t')d^\star(s)+(1-s')f^\star(s) &=d^\star(s)+s'f^\star(s)-B+f^\star(s)-s'f^\star(s) \\
       &=d^\star(s)+sf^\star(s)-B+f^\star(s)-sf^\star(s) \\
       &=(1+t^*)d^\star(s)+(1-s)f^\star(s)\\
       &\leq (1+t^*)d_2^*+(1-s)f_2^* \\
       & = d_2^*+s(\frac{d_2^*}{d^*(s)}f^*(s)-f_2^*)+f_2-\frac{d_2^*}{d^*(s)}B \\
       &\leq d_2^*+s'(\frac{d_2^*}{d^*(s)}f^*(s)-f_2^*)+f_2-\frac{d_2^*}{d^*(s)}B \\
       &= d_2^*+\frac{t'd^\star(s)+B}{f^\star(s)}\frac{d_2^*}{d^*(s)}f^*(s)-s'f_2^*+f_2-\frac{d_2^*}{d^*(s)}B \\
       &=  (1+t')d^\star_2+(1-s')f^\star_2. 
   \end{align*}
In other words, there exists a tax rate $t'$ such that solution $\langle s',t',d^\star(s),f^\star(s)\rangle$ is preferred by the freight forwarder over all alternative solutions with a higher driven distance. Therefore, the minimum driven distance at the increased subsidy level $s'$ is at most $d^\star(s)$, i.e. $d^\star(s)\geq d^\star(s')$. 
\end{proof}

\begin{proposition}
\label{prop:newProp4}
Let $f^\text{full}$ denote the scheduled line costs of the freight forwarder under the policy $(s=1,t=0)$. If $B\leq f^\text{full}$, there exists an optimal where $s=1$. If $B>f^\text{full}$, the problem is infeasible. 
\end{proposition}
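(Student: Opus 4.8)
The plan is to treat the two claims separately, both hinging on understanding the freight forwarder's behaviour when $s=1$. First I would observe that at $s=1$ the lower-level objective $(1+t)\phi d+(1-s)f$ reduces to $(1+t)d$ (recall $\phi=1$), whose minimizers over $\mathcal{X}$ are exactly the distance-minimal routing plans, \emph{independently of} $t$. Writing $d^{\min}:=\min\{d:(d,f,x)\in\mathcal{X}\}$ for this minimal distance, the set of lower-level optima at $(1,t)$ coincides for every $t\ge 0$; since the freight forwarder resolves ties deterministically on the basis of that set, its choice — and in particular the associated scheduled-line cost — is constant in $t$. That constant is precisely $f^{\text{full}}$ (its value at $t=0$), so $d^*(1,t)=d^{\min}$ and $f^*(1,t)=f^{\text{full}}$ for all $t\ge 0$.

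For the feasibility direction, assume $B\le f^{\text{full}}$. I would set $t^\star:=(f^{\text{full}}-B)/d^{\min}\ge 0$ (the degenerate case $d^{\min}=0$ forces $B=f^{\text{full}}$ and is handled by $t^\star=0$) and check that the policy $(s,t)=(1,t^\star)$ is feasible: it respects $0\le s\le1$ and $0\le t$, and its budget equals $1\cdot f^{\text{full}}-t^\star d^{\min}=B$. Its objective value is $d^*(1,t^\star)=d^{\min}$. Since every element of $\mathcal{X}$ has distance at least $d^{\min}$, every feasible bilevel solution has driven distance at least $d^{\min}$ (equivalently, by Proposition~\ref{prop:newProp3}, $d^*(s)\ge d^*(1)=d^{\min}$ for all $s$), so $(1,t^\star)$ attains the global optimum; hence an optimal solution with $s=1$ exists.

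For the infeasibility direction, assume $B>f^{\text{full}}$ and take an arbitrary policy $(s,t)$ in the domain with lower-level response $(d^*,f^*,x^*)$. I would first show $f^*\le f^{\text{full}}$. If $s=1$ this is immediate from the first paragraph. If $s<1$, comparing the chosen solution with the feasible point $(d^{\min},f^{\text{full}})\in\mathcal{X}$ gives $(1+t)d^*+(1-s)f^*\le(1+t)d^{\min}+(1-s)f^{\text{full}}$; since $d^*\ge d^{\min}$ the term $(1+t)(d^*-d^{\min})$ is nonnegative, so $(1-s)(f^{\text{full}}-f^*)\ge 0$ and thus $f^*\le f^{\text{full}}$ (alternatively one can invoke Lemma~\ref{lem:tradeoff} on the responses at $(s,t)$ and at $(1,0)$). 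Consequently the budget value satisfies $sf^*-td^*\le sf^*\le f^*\le f^{\text{full}}<B$, using $t,d^*\ge0$ and $s\le1$, $f^*\ge0$; so the budget constraint $=B$ cannot be met by any policy, and the problem is infeasible.

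The main obstacle I anticipate is the second paragraph's reliance on the deterministic tie-breaking assumption: to pin down $t^\star$ explicitly we need $f^*(1,t)$ to be independent of $t$, which requires reading ``the freight forwarder will select the same solution deterministically'' as ``the selection depends only on the set of lower-level optima'' — a point worth stating carefully, since that set is constant in $t$ when $s=1$ but can otherwise shift with the policy. Everything else is routine algebra once that is in place.
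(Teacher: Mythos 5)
Your proof is correct and follows essentially the same route as the paper's: the same constructed tax rate $t=(f^{\text{full}}-B)/d^{\text{full}}$ attaining the lower bound $d^{\text{full}}$ for the feasible case, and the same budget-constraint chain $B=sf^*-td^*\le sf^*\le f^*$ for infeasibility. You are in fact slightly more careful than the paper, which passes from $B\le f^*$ to a contradiction with $B>f^{\text{full}}$ without justifying $f^*\le f^{\text{full}}$; the comparison of the lower-level optimum against the feasible point $(d^{\min},f^{\text{full}})\in\mathcal{X}$ that you spell out (and likewise your explicit check that $f^*(1,t)=f^{\text{full}}$ for all $t$ under deterministic tie-breaking) supplies exactly the steps the paper leaves implicit.
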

\begin{proof}
    Let $d^\text{full}$ denote the driven distance under policy $(s=1,t=0)$. Under this policy, the freight forwarder only minimizes distance, so $d^\text{full}$ is a lower bound on the optimal distance. 

    If $B\leq f^\text{full}$, the transportation authority can fully subsidize the scheduled line and set a tax rate of $t=(f^\text{full}-B)/d^\text{full}\geq 0$, which results in a driven distance of $d^\text{full}$, attaining the lower bound. 

     Now assume that $B>f^\text{full}$ and that there exists a feasible solution $(s,t,d^*,f^*)$. By the budget constraint, we have that $B=sf^*-td^*\leq sf^*\leq f^*$, contradicting the assumption that $B>f^\text{full}$. 
\end{proof}

\begin{proposition}    
\label{prop:Proposition_4}
Under the optimal policy, increasing the budget does not influence the freight forwarder's optimal routing decision but decreases its total cost, i.e., for any two optimal policies with $s_1 = s_2 = 1$ with their corresponding budget $B_1 < B_2 \leq f^\text{full}$ and total freight forwarder cost $C_1, C_2$, it holds that
\begin{align*}
    B_1 + C_1 = B_2 + C_2
\end{align*}
\end{proposition}
\begin{proof}
Let $\langle s_1,t_1,d_1^\star,f_1^\star\rangle$ and $\langle s_2,t_2,d_2^\star,f_2^\star\rangle$ be two optimal solutions for the policy maker's problem 
with $s_1=s_2$ the subsidy in either solution,
$t_1, t_2$ the associated tax level, $d_1^\star$ and $d_2^\star$ the distance traveled in either solution and $f_1^\star$ and $f_2^\star$ the total cost of putting freight on the scheduled line, such that $B_1 < B_2$ are their corresponding given budgets.

Provided that $s_1=s_2=1$, the freight forwarder's problem reduces to minimizing driven distance, resulting in $d_1^*=d_2^*$ and $f_1^*=f_2^*$, and therefore
\begin{align*}
d_1^\star + f_1^\star  &= d_2^\star + f_2^\star.  \\
\intertext{Subtracting and adding the corresponding budgets \eqref{eq:constr_budget} on both sides yields}
\underbrace{d_1^\star + f_1^\star - (s_1 f_1^\star - t_1 d_1^\star)}_{C_1} + B_1 &= \underbrace{d_2^\star + f_2^\star - (s_2 f_2^\star - t_2 d_2^\star)}_{C_2} + B_2 
\end{align*}
which is equivalent to the original statement. 
\end{proof}






\end{document}

\endinput